\newcommand*\linenomathpatch[1]{%
  \cspreto{#1}{\linenomath}%
  \cspreto{#1*}{\linenomath}%
  \csappto{end#1}{\endlinenomath}%
  \csappto{end#1*}{\endlinenomath}%
}
\newcommand{\EQ}{\begin{eqnarray}}
\newcommand{\EN}{\end{eqnarray}}
\newcommand{\EQQ}{\begin{eqnarray*}}
\newcommand{\ENN}{\end{eqnarray*}}
\newcommand{\bremark}{\begin{remark} \begin{rm} }
\newcommand{\eremark}{ \end{rm} \rule{1mm}{2mm}\end{remark} }
\newcommand{\basm}{\begin{assumption} \begin{rm}}
\newcommand{\easm}{\end{rm}\end{assumption}}
\newcommand{\btheorem}{\begin{theorem} \begin{rm} }
\newcommand{\etheorem}{ \end{rm} \rule{1mm}{2mm}\end{theorem} }
\newcommand{\blemma}{\begin{lemma} \begin{rm} }
\newcommand{\elemma}{ \end{rm} \rule{1mm}{2mm}\end{lemma} }
\newcommand{\bcorollary}{\medskip\begin{corollary} \begin{rm} }
\newcommand{\ecorollary}{ \end{rm} \rule{1mm}{2mm}\end{corollary} }
\newcommand{\bdefinition}{\medskip\begin{definition}\begin{rm} }
\newcommand{\edefinition}{ \end{rm} \rule{1mm}{2mm}\end{definition} }
\newcommand{\bproposition}{\medskip\begin{proposition} \begin{rm} }
\newcommand{\eproposition}{ \end{rm} \rule{1mm}{2mm}\end{proposition} }
\newcommand{\bexample}{\begin{example} \begin{rm} }
\newcommand{\eexample}{ \end{rm} \rule{1mm}{2mm}\end{example} }
\newcommand{\refEqsRange}[2]{%
    \hyperref[eq#1]{(#1)}--\hyperref[eq#2]{(#2)}
}
\newtheorem{theorem}{\bf Theorem}[section]
\newtheorem{lemma}{\bf Lemma}[section]
\newtheorem{definition}{\bf Definition}[section]
\newtheorem{remark}{\bf Remark}[section]
\newtheorem{corollary}{\bf Corollary}[section]
\newtheorem{proposition}{\bf Proposition}[section]
\newtheorem{example}{\bf Example}[section]
\newtheorem{assumption}{\bf Assumption}[section]
\newtheorem{problem}{\it Problem}
\journal{arXiv}
\begin{document}

\begin{frontmatter}
\title{
 Integrated Take-off Management and Trajectory Optimization for Merging Control in Urban Air Mobility Corridors 
}

\author[sysu]{Yingqi LIU}
\ead{liuyq278@mail2.sysu.edu.cn}
\author[pcl]{Tianlu PAN}
\ead{pantl@pcl.ac.cn}
\author[sysu]{Jingjun TAN}
\ead{tangjj7@mail2.sysu.edu.cn}
\author[sysu]{Renxin ZHONG\corref{cor}}
\ead{zhrenxin@mail.sysu.edu.cn}
\author[sysu]{Can CHEN\corref{cor}}
\ead{can-caesar.chen@connect.polyu.hk}
\cortext[cor]{Corresponding authors}

\address[sysu]{Guangdong Provincial Key Laboratory
of Intelligent Transportation Systems, School of Intelligent Systems Engineering, Sun Yat-sen University, Shenzhen, China}
\address[pcl]{Department of Network Intelligence, Peng Cheng Laboratory, Shenzhen, China}
\def\corresemail{can.chen@epfl.ch, zhrenxin@mail.sysu.edu.cn}



\begin{abstract}

Urban Air Mobility (UAM) has the potential to revolutionize daily transportation, offering rapid and efficient aerial mobility services. Take-off and merging phases are critical for air corridor operations, requiring the coordination of take-off aircraft and corridor traffic while ensuring safety and seamless transition. This paper proposes an integrated take-off management and trajectory optimization for merging
control in UAM corridors. We first introduce a novel take-off airspace design. To our knowledge, this paper is one of the first to propose a structured design for take-off airspace. 
Based on the take-off airspace design, we devise a hierarchical coordinated take-off and merging management (HCTMM) strategy.
To be specific, the take-off airspace design can simplify aircraft dynamics and thus reduce the dimensionality of the trajectory optimization problem whilst mitigating obstacle avoidance complexities. 
The HCTMM strategy strictly ensures safety and improves the efficiency of take-off and merging operations.
At the tactical level, a scheduling algorithm coordinates aircraft take-off times and selects dynamic merging points to reduce conflicts and ensure smooth take-off and merging processes. At the operational level, a trajectory optimization strategy ensures that each aircraft reaches the dynamic merging point efficiently while satisfying safety constraints.
Simulation results show that, compared to representative strategies with fixed or dynamic merging points, the HCTMM strategy significantly improves operational efficiency and reduces computational burden, while ensuring safety under various corridor traffic conditions. Further results confirm the scalability of the HCTMM strategy and the computational efficiency enabled by the proposed take-off airspace design. These findings indicate that the proposed method provides a flexible framework for future UAM operations, supporting the smooth merging of electric vertical take-off and landing (eVTOL) aircraft into UAM corridors.

\end{abstract}

\begin{keyword}
    Take-off Management, Trajectory Optimization, Merging Control, Take-off Airspace Design, Urban Air Mobility Corridor.
\end{keyword}
\end{frontmatter}


\section{Introduction}
\label{sec:Introduction}

Urban transportation is facing unprecedented challenges, with escalating congestion, pollution, and infrastructure strain posing significant concerns. As a groundbreaking mobility solution, Urban Air Mobility (UAM) has the potential to redefine the future of transportation. By leveraging low-altitude airspace, UAM can overcome the limitations of the ground transportation system and unlock an entirely new three-dimensional (3D) urban mobility paradigm. Advances in machine intelligence, vertical take-off and landing technologies, and battery innovations have provided greener, quieter, and more autonomous air vehicles, specifically electric vertical take-off and landing (eVTOL) aircraft. Moreover, the concept of UAM promises effective, automated, and environmentally friendly passenger and cargo air transport services \citep{Bradford2020UAM}. Given these advantages, the development of UAM has emerged as a central priority in global urban planning. \cite{mayakonda2020top} estimated that between 327 million and 1,207 million passenger trips will occur in 31 cities around the world with a potential deployment for UAM by 2035.  As one of the pioneers, China has developed a UAM market valued at 500 billion RMB by 2023 and is projected to exceed two trillion RMB by 2030 \citep{xinhuabao2024}.

As large-scale deployment of UAM approaches, comprehensive airspace organization and management are essential to ensure the safety and efficiency of UAM systems. The current literature mainly proposes four types of low-altitude airspace structures: full-mixed \citep{sunil2015metropolis}, layer-based \citep{wu2021trajectory}, zone-based \citep{weng2025urban}, and corridor-based \citep{el2024fixed}. Among them, the corridor-based structure utilizes designated UAM corridors to separate UAM operations from other air traffic \citep{bauranov2021designing}. In addition, it provides a fixed routing strategy that ensures safe and separated eVTOL traffic, increases throughput, and reduces the number of potential conflicts \citep{el2024fixed,bauranov2021designing}. Owing to its well-organized traffic regulations, enhanced safety measures, and ability to foster public trust, the corridor-based structure is widely regarded as the leading approach for low-altitude airspace organization in the short to medium term \citep{cohen2021urban}. Accordingly, an increasing number of studies have begun to focus on the design of corridor-based airspace, mainly addressing aspects such as the cruise corridor network layout and its detailed configuration. To address airspace constraints caused by complex urban environments and existing conventional air traffic systems, \cite{el2024fixed,slama2022generating} designed customized safe cruise corridor networks for densely built urban and airport areas. Building on the safe corridor networks, \cite{Fontaine2023UAM} explores variations in UAM corridor topology designed to address specific challenges, such as the implementation of ``passing zones''. \cite{he2022route} defines the cruise corridor structure design for corridor-based airspace, including the geometric parameters of unidirectional paths, buffer zone configurations, and safety separation requirements.
However, most existing literature primarily focuses on the cruise phase, while airspace utilization policies and design specific to the take-off and merging phases are still in their infancy. 

The take-off and merging phases of an eVTOL start with a scheduled departure from the vertiport and end upon reaching the designated corridor and merging at a prescribed merge point.
As UAM systems evolve, there will be a dense distribution of vertiports, an increasingly complex corridor network, and a significant rise in take-off demand \citep{Fontaine2023UAM}. In this context, the absence of well-defined airspace design poses significant challenges for eVTOLs in cooperative trajectory optimization and obstacle avoidance during the take-off and merging phases. These challenges highlight the importance of a comprehensive airspace design for take-off and merging operations.

In addition to well-defined airspace design, the management strategies in the take-off and merging phases are equally critical, as they directly affect airspace capacity and affect flight safety \citep{doole2022investigation}.
According to the simulation results by \cite{cummings2023measuring}, weaving areas with traffic merging are the main bottlenecks with higher conflict rates and lower traffic speed of air corridors. In light of these traffic challenges, most studies focus on scenarios involving a single corridor-vertiport pair take-off and merging management and can be categorized into two classes: tactical conflict management strategies and operational trajectory optimization strategies.

Tactical conflict management involves decisions across spatial and temporal dimensions to balance traffic demand with airspace capacity at bottlenecks, such as take-off times and merging points.
The Demand Capacity Balancing (DCB) approach uses the capacity-constrained bottleneck model to optimize the take-off times of eVTOLs. \cite{chen2024integrated} developed an optimization-based DCB algorithm, which takes the scheduled take-off time of all aircraft as input and calculates the optimal times required to minimize total delay in advance. Considering extreme scenarios where take-off demand exceeds vertiport capacity, \cite{lee2022demand} validated that the proposed DCB algorithm could work well to manage the demand effectively by assigning pre-departure schedule delays. 
As for merging points, on-ramp structures in ground traffic have also been applied to  UAM operations by simplifying the merging process of eVTOLs into one or more fixed merging points. 
Building on this concept, \cite{ren2023aircraft} proposed a scheduling model for flight sequences entering the corridors via ``the on-ramp'' that minimizes total delay. \cite{LIANG2018207} considered multiple fixed merging points and established a Multi-Level Point Merge (ML-PM) model for departing runway allocation and take-off time optimization, which enables more efficient and realistic conflict-free take-off management. Intuitively, merging into air corridors would have more flexibility than that of ground traffic since vehicular trajectories are restricted by the topology of on-ramps (thus fixed merging points).  Relying solely on fixed parameters such as fixed merging points and constant take-off and merging durations may reduce the flexibility and efficiency of UAM systems, especially when the take-off demand and corridor traffic are heavy. 
To address this limitation, this paper develops a dynamic merging point selection method via real-time coordination with the air corridor traffic.
To the best of our knowledge, this is the first work to introduce dynamic merging points for corridor-based take-off and merging operations in UAM. 
Once the dynamic merging points are adopted, it becomes necessary to optimize take-off time to synchronize the take-off and merging processes with the corridor traffic state, rather than assuming a constant duration as in \cite{lee2022demand,chen2024integrated}\footnote{This constant duration assumption presents an inherent trade-off: excessively conservative durations may reduce scheduling efficiency, while aggressive estimates may compromise safety requirements.}.



Operational trajectory optimization strategies involve optimizing the speed and/or acceleration of eVTOLs, aiming to generate safe and efficient take-off and merging trajectories \citep{doole2022investigation, preis2023time}.
Existing studies on take-off and merging trajectory optimization can be broadly categorized into two main approaches: reinforcement learning (RL)-based methods and optimal control problems (OCP). RL-based methods have been applied to coordination problems of take-off and merging due to their scalability and robustness to environmental disturbances. \cite{deniz2024reinforcement} proposed a multi-agent RL (MARL) framework to coordinate the longitudinal speed profiles of up to 20 eVTOLs passing through a fixed merging point.
However, adopting more flexible dynamic merging points and operating in more permissive airspace impose numerous hard constraints on trajectories, such as collision avoidance \citep{wu2022convex}, mechanical limitations \citep{wu2022convex,liu2024flight}, and airspace boundary compliance \citep{park2023trajectory,lu2023multi,wu2024convex}. Existing RL-based take-off and merging control can only indirectly consider constraints by incorporating safety-related penalties into the reward function, which can hardly provide safety guarantees \citep{zhou2024enhancing}.
In contrast, OCP-based approaches explicitly model system dynamics and constraints, providing deterministic solutions that strictly adhere to safety requirements.
\cite{wu2022convex} formulated an OCP for eVTOL merging coordination, considering collision avoidance and mechanical limitations. Similarly, \cite{park2023trajectory} optimized energy-efficient 2D take-off and merging trajectories with path and vortex ring state avoidance constraints within a uniform wind.
Nevertheless, most existing OCP-based methods focus on light corridor traffic.
In scenarios with heavy take-off demand and corridor traffic, 
the existing OCP approaches may require eVTOLs to detour for obstacle avoidance or hover near the corridor waiting for a suitable merging opportunity. This significantly reduces efficiency and increases energy consumption.

To conclude, integrating tactical conflict management and operational trajectory optimization strategies can enhance the safety and efficiency of UAM systems.
The concept of integrating tactical and operational strategies has been studied in the context of unmanned aerial vehicle (UAV) formation aggregation and reconfiguration. At the tactical level, heuristic algorithms \citep{gao2023hybrid} and integer programming (IP) approaches \citep{wang2021formation,zhang2025hierarchical} were employed to determine the geometric structure and spatial position of the target formation. At the operational level, distributed control algorithms, such as model predictive control (MPC) \citep{wang2021formation,gao2023hybrid}, were employed to guide each UAV to reach its target position safely. Similar control methods have been adopted in automated guided vehicles (AGVs) \citep{yang2018integrated,yue2022dynamic} and flexible job shop (FJS) scheduling \citep{li2022bilevel,saouabi2024two,jiang2025bi}. In both domains, the tactical level is responsible for assignment decisions and global resource allocation, focusing on macroscopic-level planning. The operational level undertakes path planning and job sequencing, emphasizing detailed execution. These hierarchical scheduling problems can be solved by bilevel RL algorithms \citep{li2022bilevel}, heuristic algorithms \citep{yang2018integrated,saouabi2024two,jiang2025bi}, or their combinations \citep{yue2022dynamic}.
The on-ramp merging trajectory optimization problem for connected and automated vehicles (CAVs) is most closely aligned with the merging problem investigated in this paper. Recent studies solve the optimal merging sequence problem for on-ramp CAVs by the mixed integer programming (MIP) \citep{jing2019cooperative,mu2021event,tang2022novel,chen2023integrated}. However, the MIP approach may become computationally intractable for large-scale problems, which motivates the development of more computationally efficient rule-based alternatives.
The rule-based methods include safety-based methods, virtual mapping, etc. To minimize travel time and ensure safe merging, \cite{xue2022platoon} defined the safe-valid gap and determined the pre-target merging gap selection principle. According to \cite{duret2020hierarchical}, the virtual mapping method compares arrival times to a prescribed merging point for potentially conflicting CAVs.
Vehicles that reach the merging point first will be assigned a higher priority. 
Based on the merging sequence, the operational trajectory optimization models in different studies target different objectives, such as traffic efficiency \citep{jing2019cooperative,shi2023cooperative}, energy consumption \citep{duret2020hierarchical,xue2022platoon}, and passenger comfort level \citep{jing2019cooperative,jing2022integrated}. 
These models are subject to vehicle dynamics, safety requirements, and technical constraints.
The aforementioned strategies from UAV formation, AGV and FJS scheduling, and ground transportation provide valuable insights for studying the take-off and merging problems in UAM corridors. However, unlike the fixed target assignments in UAV formation, the merging points in the tactical level for eVTOL are inherently dynamic and time-varying. Meanwhile, at the operational level, the trajectory planning for eVTOLs is a dynamic and continuous state space problem, unlike the mostly static and discrete state space planning in AGV and FJS scheduling. In addition, the trajectory planning for eVTOLs involves higher-dimensional space and must address more mechanical and safety constraints compared with those encountered in the merging control of CAVs or the less constrained UAV formation scenarios. Due to these difficulties, existing methods (from other domains) cannot be directly applied in the take-off and merging control of eVTOLs to UAM corridors.   


To address the aforementioned limitations of existing corridor-based airspace design and management strategies, we develop an integrated strategy to enhance mobility and
flight safety in take-off and merging operations.
We propose a structured design for the take-off airspace to reduce the dimensionality of the trajectory optimization problem and alleviate obstacle avoidance challenges during peak demand periods.
Furthermore, we develop a hierarchical coordinated take-off and merging management (HCTMM) strategy.
At the tactical level, we develop an algorithm to coordinate take-off timing and select dynamic merging points for each merging eVTOL. This algorithm mitigates potential conflicts by coordinating vertiport take-off demands across temporal and spatial dimensions.
At the operational level, we propose a trajectory optimization strategy to jointly minimize time and control cost by optimizing the acceleration profile and trajectory of each merging eVTOL. Additionally, the optimal taking-off and merging trajectory is subject to safety constraints and boundary conditions provided by the tactical level. 
To validate the effectiveness of the airspace design and HCTMM strategy, we consider both single and multiple corridor-vertiport scenarios. We validate the proposed approach on a single corridor-vertiport pair to demonstrate reliable safety assurances across different flow levels. Compared with representative strategies employing fixed or dynamic merging points, the HCTMM strategy achieves markedly improved travel efficiency (including shorter flight times and lower control costs) and a significantly reduced computational burden.
 Furthermore, simulation results on multiple corridor-vertiport pairs validate the scalability of the HCTMM strategy and the effectiveness of the airspace design in improving computational efficiency.

The remainder of this paper is organized as follows: 
\autoref{Problem statement} introduces the take-off airspace design and the problem formulation. \autoref{sec: Tactical conflict management} introduces tactical level management, including dynamic merging point selection and take-off time coordination.
\autoref{sec: Operational deconfliction control} devises the operational level management scheme to generate control and a time-optimal take-off and merging trajectory.  
\autoref{sec: Numerical Experiments} presents the simulations to verify the advancement of the proposed take-off airspace design and HCTMM strategy. \autoref{sec:Conclusion} concludes the paper. The appendices summarize key notations and present companion materials. 

\section{Take-off airspace design and take-off, merging control problem formulation}
\label{Problem statement}
In this section, we present the design principles of the take-off airspace and describe the take-off and merging problem. First, in \autoref{sec: Take-off airspace design}, we introduce the structured take-off airspace design and the simplification of vehicle dynamics. Next, in \autoref{sec: multi-layer corridor structure}, we propose a multi-layer corridor structure and a set of ``direct-indirect'' accessibility rules to support the deployment of structured take-off airspace design, particularly for more complex corridor networks. Then, in \autoref{sec:Trajectory optimization for take-off and merging control}, based on the proposed structured take-off airspace, we formulate the take-off and merging control problem. 

\subsection{Take-off airspace design}
\label{sec: Take-off airspace design}
According to the regulations of the European Union Aviation Safety Agency (EASA) \citep{EasyAccess2024}, the take-off procedure of an eVTOL operation is generally divided into two sequential phases: a vertical take-off phase and a take-off climb phase. A transition point $D$ is defined at an altitude $L_d$ above the vertiport, which marks the end of the vertical take-off segment and the beginning of the climb phase. During the vertical take-off phase, the eVTOL exhibits motion only in the vertical direction. In this phase, the vertiport enforces two regulations: maintaining an obstacle-free volume surrounding the vertiport and imposing a minimum take-off interval \citep{song2022optimal, EasyAccess2024}. These measures establish a well-defined structure and stringent controls that guarantee safe and predictable operations.
In contrast, once the eVTOL enters the take-off climb phase, it performs a three-dimensional maneuver involving vertical ascent, with forward acceleration and potential lateral adjustments required for merging into the designated corridor. In this phase, each merging eVTOL $i \in \mathcal{I}$ operates with full degrees of freedom in position and speed in the unstructured airspace, where $\mathcal{I}$ denotes the set of merging eVTOLs planned for take-off in the region. The dynamics of eVTOL is nonlinear and involves 4-dimensional control variables, i.e., the net thrust $F_i$ and Euler angles $(\alpha_i, \beta_i, \gamma_i)$, and 6-dimensional state variables, i.e., the position $ (x_i, y_i, z_i) $, and the speed $ (v_{i}^x, v_{i}^y, v_{i}^z) $. Following \cite{quan2017introduction}, we present the detailed dynamics in \autoref{sec: Appendix}. 
However, high-dimensional state and control spaces present significant challenges for real-time collaborative trajectory optimization. 
Nevertheless, the high degrees of freedom in trajectories not only amplify the risk of potential conflicts but also significantly increase the computational complexity of conflict resolution strategies. These make it more difficult to plan safe and efficient take-off and merging trajectories for eVTOLs, particularly during peak demand periods.

\begin{figure}[h]
    \centering
    \includegraphics[width=1.0\textwidth]{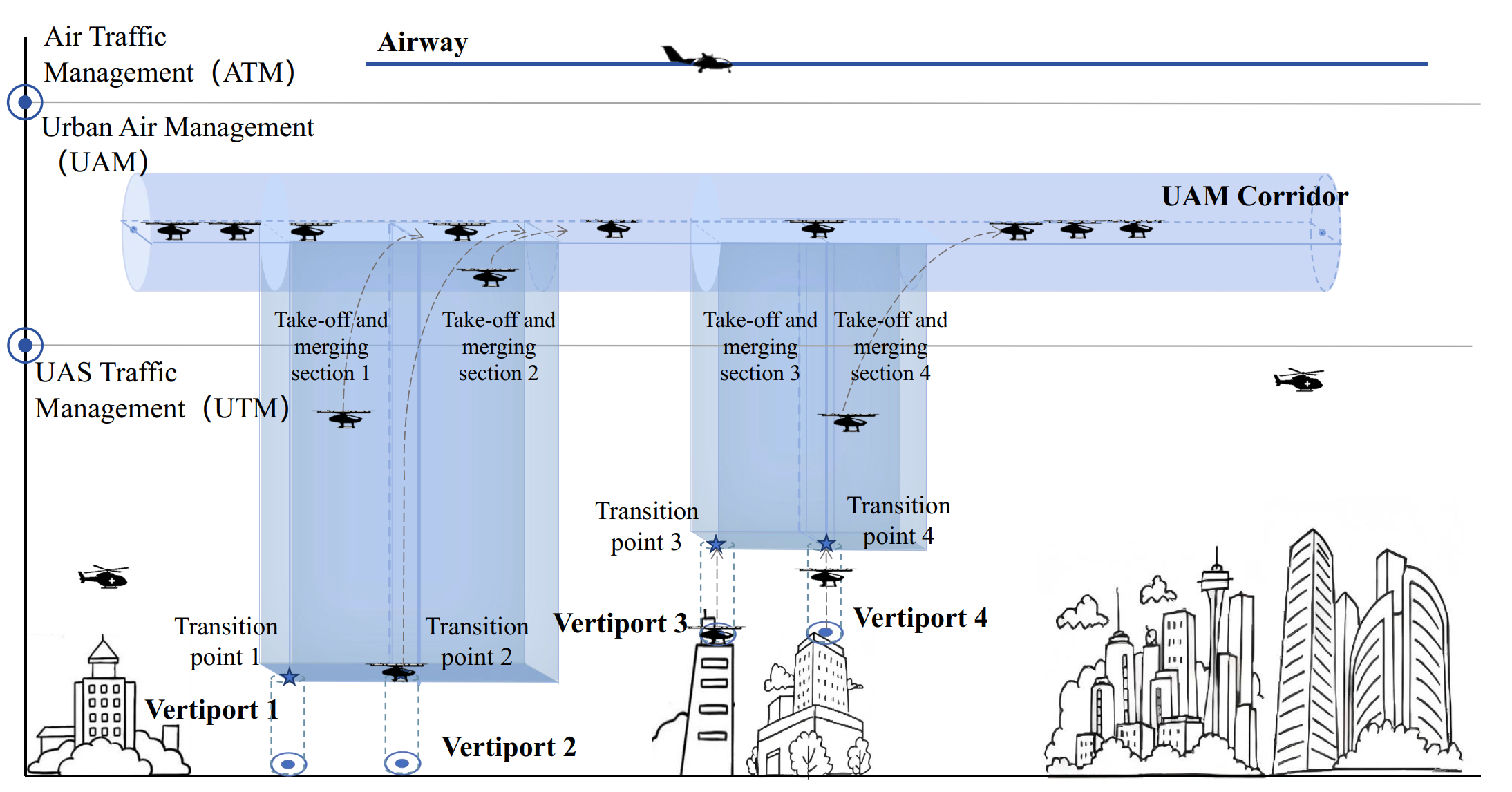}
    \caption{Design concept for take-off airspace with separated TM sections.}
    \label{fig:05}
\end{figure}

To address these challenges, we separate a dedicated Take-off and Merging (TM) section for each pair of vertiport and the corresponding corridor, as illustrated in \hyperref[fig:05]{Figure \ref*{fig:05}}. The design of these TM sections follows a well-defined geometric structure. Specifically, each TM section is defined as the intersection volume between a quadrangular prism and the airspace above the obstacle limitation surface (OLS) of the vertiport. The quadrangular prism is constructed with parallelogram sides and two equal-sized rectangular bases, truncated by the transition point plane and the central plane of the corridor. Notably, the length of the quadrangular prism corresponds to the preset maximum merging length, while the width is equal to the diameter of the UAM corridor. 
In this paper, we focus on the ``single-lane'' UAM corridor configuration, which is widely adopted in existing structured air traffic take-off and merging studies \citep{wu2022convex,ren2023aircraft,deniz2024reinforcement}. Under this assumption, lateral maneuvers are restricted, making the width of the TM section a secondary factor in trajectory planning. 
Consequently, we utilize the TM section structure to confine the take-off climb and merging trajectories of each corridor-vertiport pair to a two-dimensional (2D) plane. 

Building on the structured take-off airspace, we perform a series of coordinate transformations to simplify the dynamics. We first translate the inertial coordinate system $X_{i}Y_{i}Z_{i}$ for each eVTOL $i$ by shifting its origin from the vertiport $O_i$ to the corresponding transition point $D_i$. The new coordinate system for eVTOL $i$ is denoted as $X_i^* Y_i^* Z_i^*$ and is defined by:

\begin{equation}
\label{eq:translation_i}
\begin{bmatrix}
x_i^* \\ y_i^* \\ z_i^*
\end{bmatrix}
=
\begin{bmatrix}
x_i \\ y_i \\ z_i
\end{bmatrix}
-
\begin{bmatrix}
0 \\ 0 \\ L_d
\end{bmatrix},
\quad
O_i = (O_i^x, O_i^y, O_i^z), \quad D_i = (O_i^x, O_i^y, O_i^z + L_d).
\end{equation}
After this coordinate transformation, we constrain the take-off climb trajectory of each eVTOL $i$ to the $X_i'D_iZ_i'$ plane, which is defined by the $X_i'$-axis, the $Z_i'$-axis and the new origin $D_i$ of the $X_i' Y_i' Z_i'$ coordinate system. The $X_i' Y_i' Z_i'$ system is obtained by applying a clockwise rotation of angle $\phi_i \in (-90^\circ,90^\circ)$ about the $X_i^*$-axis to the translated system $X_i^* Y_i^* Z_i^*$, with the associated rotation matrix $R_{2,i}$:

\begin{equation}
\label{eq:rotation_i}
\begin{bmatrix}
x_i' \\ y_i' \\ z_i'
\end{bmatrix}
= R_{2,i}
\begin{bmatrix}
x_i^* \\ y_i^* \\ z_i^*
\end{bmatrix}, 
\quad
R_{2,i} =
\begin{bmatrix}
1 & 0 & 0 \\
0 & \cos\phi_i & -\sin\phi_i \\
0 & \sin\phi_i & \cos\phi_i
\end{bmatrix}.
\end{equation}
Then we introduce the tip-path-plane pitch angle $\theta_i$ to represent the angle between the thrust vector and the $ Z_i'$-axis within the $X'_iD_iZ_i'$ plane, thereby replacing the use of attitude angles $(\alpha_i, \beta_i, \gamma_i)$ in describing the eVTOL's dynamics.
Based on these, we further simplify the system dynamics, reducing the dimension from 10 to 6, with two control variables ($F_i$ and $\theta_i$) and four state variables $(x'_i, z'_i, v_i^{x'}, v_i^{z'})$. 
This can simplify the subsequent control design and significantly reduce the computational burden when solving the trajectory optimization problem. Details are presented in \autoref{sec: Appendix}. 
Moreover, these TM sections are designed to be non-overlapping, ensuring spatial separation of take-off climb and merging operations across different corridor-vertiport pairs. This spatial decoupling confines trajectory conflict management to within individual corridor–vertiport pairs and reduces the number of safety constraints that need to be considered. As a result, the large-scale cooperative optimization problem can be decomposed into a set of single-eVTOL trajectory planning subproblems, which significantly improves computational efficiency and enhances the feasibility of real-time implementation.


\subsection{Multi-layer corridor structure}
\label{sec: multi-layer corridor structure}

As the corridor networks above vertiports become increasingly complex, separating TM section for each corridor-vertiport pair becomes increasingly difficult. In this paper, we design a multi-layer corridor structure and a set of ``direct-indirect'' accessibility rules to support different TM sections' separation.

\begin{figure}[!h]
    \centering
    \begin{subfigure}[c]{0.45\textwidth}
        \centering    
        \includegraphics[width=\textwidth]{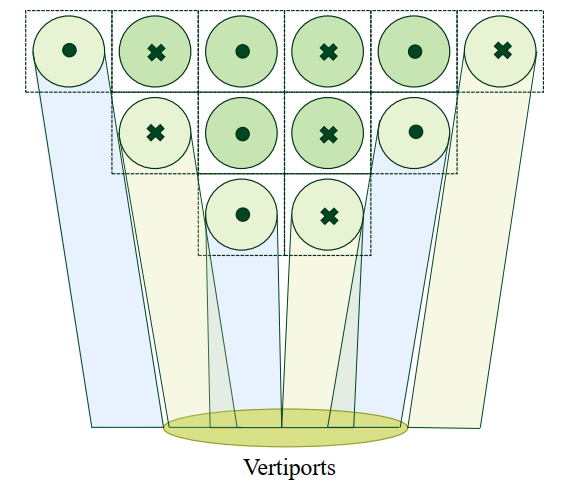}
        \caption{Cross-sectional view of the multi-layer corridor structure over the vertiport}
        \label{fig:03}
    \end{subfigure}
    \hfill
    \begin{subfigure}[c]{0.475\textwidth}
        \centering
        \includegraphics[width=\textwidth]{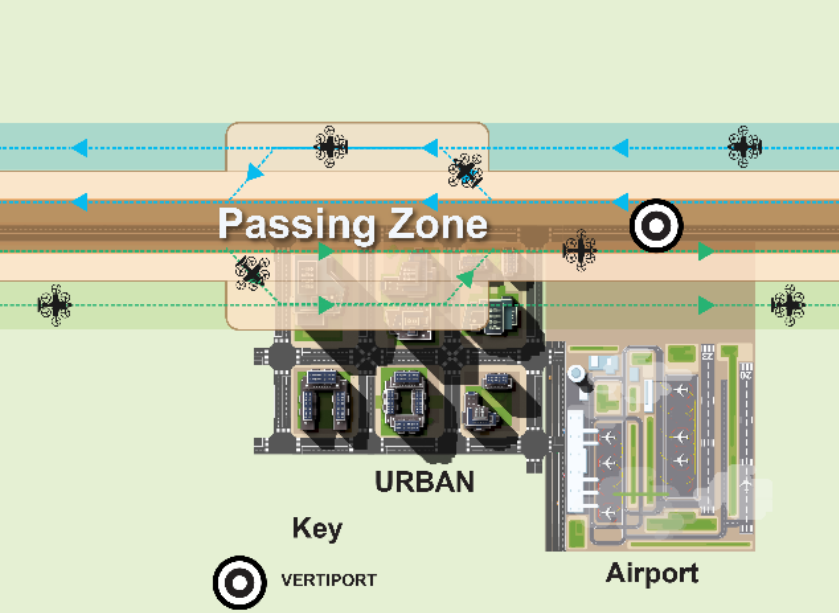}
        \caption{Passing zones schematic from FAA \citep{Fontaine2023UAM} }
        \label{fig:04}
    \end{subfigure}
    \caption{Multi-layer corridor structure design.}
\end{figure}

The multi-layer corridor structure is roughly distributed as a semicircle, where the outer ring is mainly for short-range flights, while the inner ring is for long-range flights. Moreover, the inbound and outbound corridors are alternately distributed.
In the cross-sectional view \hyperref[fig:03]{Figure \ref{fig:03}}, the symbol ``\textbf{·}'' denotes inbound corridors, while ``\textbf{$\times$}'' represents those outbound corridors. Inspired by the concept of passing zones that facilitate lane changes between corridors \citep{Fontaine2023UAM} (see \hyperref[fig:04]{Figure \ref*{fig:04}}), we develop a set of ``direct-indirect'' connectivity rules. In the multi-layer structure, the outer ring corridors are directly connected to vertiports, allowing eVTOLs to merge directly from vertiports into these corridors. Conversely, the inner corridors are indirectly connected, requiring eVTOLs to first merge into the outer corridors before transitioning to the inner corridors via passing zones. It is important to note that due to the size of vertiports, not all outer corridors can be directly connected, particularly those in the upper layers. 
With this multi-layer corridor structure and the ``direct-indirect" connectivity rules, we can accommodate complex corridor traffic flow and separate the TM section for each corridor-vertiport pair (see, e.g., \hyperref[fig:03]{Figure \ref*{fig:03}}).

\subsection{Trajectory optimization for take-off and merging control}
\label{sec:Trajectory optimization for take-off and merging control}
As illustrated in \hyperref[fig:05]{Figure~\ref*{fig:05}}, the eVTOL first accelerates vertically to the take-off safety speed $V_{TOSS}$, and then ascends at a constant vertical speed until reaching the transition point during the vertical take-off phase. The take-off safety speed $V_{TOSS}$ is aligned with the $Z'$-axis, and its magnitude, denoted by $v_{TOSS}$. Then, each eVTOL \(i\) performs its take-off climb and merging trajectories within an \((X'_i D_i Z_i')\) plane defined by the structured take-off airspace design. \hyperref[fig:07]{Figure \ref*{fig:07}} depicts the 2D coordinate system for take-off climb and merging processes. The origin $(D_i)$, corresponding to the transition point of vertiport $(O_i)$, is attached to the transition point, with the horizontal $X'_i$-axis aligned with the direction of the corridor and the vertical $Z_i'$-axis perpendicular to the corridor direction.
Inspired by \cite{xue2022platoon,shi2023cooperative}, we establish an observation zone upstream of the vertiport. The length of the observation zone $L_{o}$ depends on the communication range of the vertiport infrastructure. 
For clarity, we define eVTOLs taking-off from vertiports as ``merging eVTOLs'' and those entering the observation zone as ``corridor eVTOLs''. 
Existing literature on UAM management typically emphasizes prioritizing airborne aircraft over those awaiting take-off to maintain orderly airspace operations \citep{sacharny2022lane,chen2024integrated}, with the National Aeronautics and Space Administration (NASA) management framework serving as a prominent example \citep{kopardekar2016unmanned}. In this paper, we adopt a similar principle by safeguarding the priority right-of-way for corridor eVTOLs to ensure the stability and controllability of corridor operations.


\begin{figure}[h]
    \centering
    \includegraphics[width=1.0\textwidth]{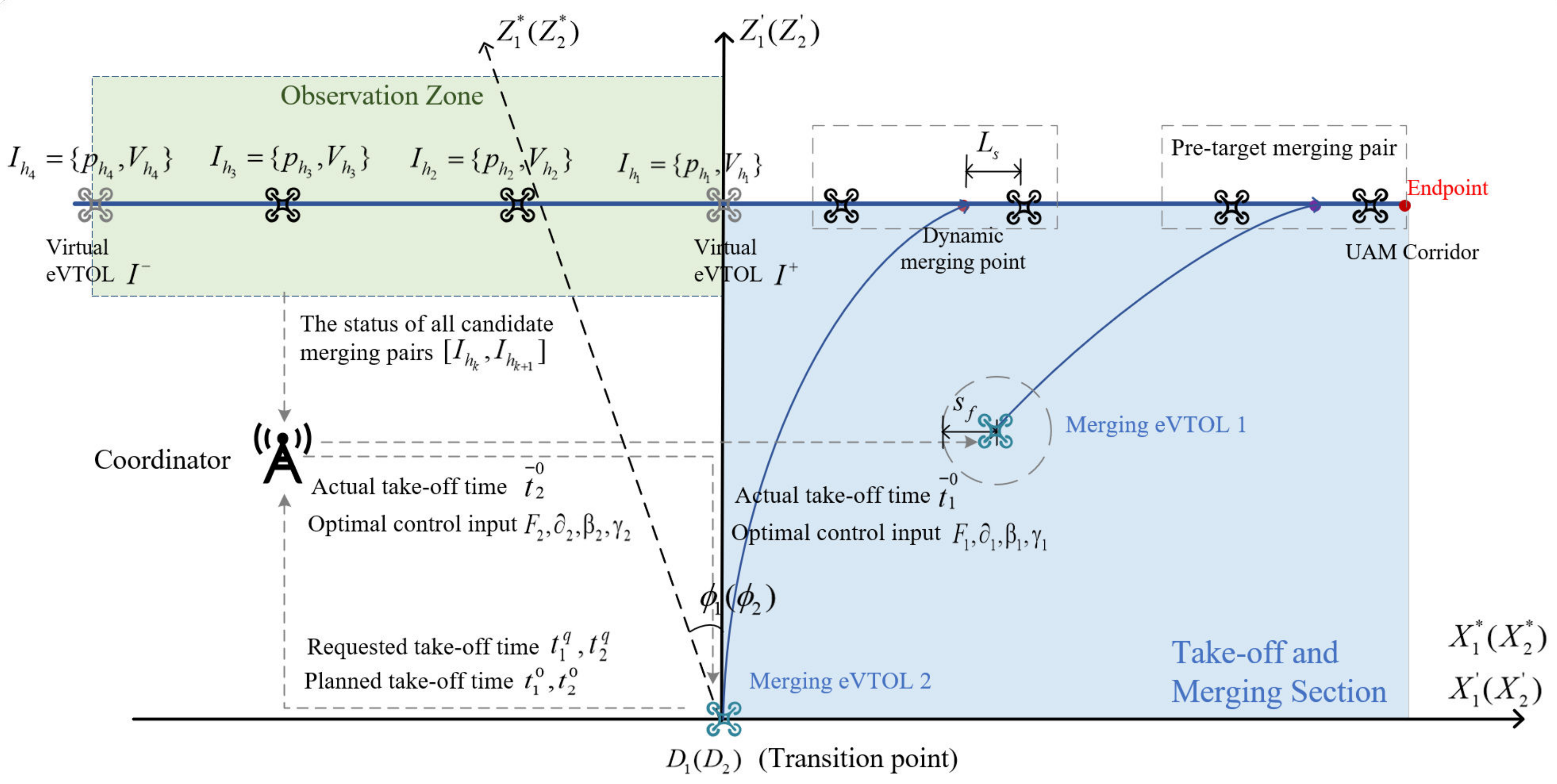}
    \caption{Illustration of eVTOLs take-off climb and merging scenario.}
    \label{fig:07}
\end{figure}

In the corridor, the state $I_{h_k}(t)=\{p_{h_k}(t), V_{h_k}(t)\} $ of eVTOL $h_k$ can be accessed by the central coordinator in real time, where
the position vector $p_{h_k}(t)=[x'_{h_k}(t),z'_{h_k}(t)] $ and speed vector $V_{h_k}(t)=[v_{h_k}^{x'}(t),v_{h_k}^{z'}(t)] $ represent the spatial coordinates and the corresponding speed components in the horizontal and vertical axis, respectively.  
Let $H(t)$ denote the set of corridor eVTOLs and $N_{H}(t)$ denote the number of corridor eVTOL sets at time $t$. Using the horizontal position $x'_{h_k}(t)$ and the vertical position $z'_{h_k}(t)$ of eVTOL $h_k$, 
we can define $H(t)$ as:
\begin{equation}\label{eq1}
H(t) = \left \{ h_k: h_k\in K(t), -L_{o}\le x'_{h_k}(t)\le 0, z'_{h_k}(t)=(L_c-L_d) \sec\phi_i \right \} 
\end{equation}
where $K(t)$ denotes the set of all eVTOLs within the corridor at time $t$ and  $L_{c}$ is the height of the corridor. To simplify, we index the $N_{H}(t)$ members of $H(t)$ in ascending order based on their distance from the vertiport, such that $H(t) = \{h_1, \ldots, h_{N_{H}(t)}\}$ and $h_k$ follows $h_{k+1}$. This setup helps us recognize merging pairs as ($h_k$,\ $ h_{k+1} $), allowing eVTOLs to execute merging maneuvers into the corresponding dynamic merging point $P^{k} (t)=[x'_{h_k}(t)-L_s, (L_c-L_d) \sec\phi_i]$ between these merging pairs. Here, $ L_s $ is the minimum safe distance between the adjacent corridor eVTOLs.
However, it excludes the possibility of eVTOL $ i $ merging ahead of the first corridor eVTOL $h_1$ or behind the last $h_{N_{H}(t)}$. 
To address this, we dynamically extend the corridor eVTOL set at each time when a merging eVTOL requests a take-off. Specifically, if $ x'_{h_1}(t_i^q) \leq -2L_s $, we augment the set with a virtual eVTOL agent $I^{+}$, whose kinematic state evolves according to:
\[
V_{I^{+}} = [ v_{c}^{\min},0], \quad
x'_{I^{+}}(t) = \int_{t_i^q}^{t} V_{I^{+}}(\tau) \mathrm{d}\tau, \quad 
z'_{I^{+}} = (L_c-L_d) \sec\phi_i
\]
where $ v_{c}^{\min}$ denotes the minimum permissible horizontal cruise speed for the corridor eVTOL. Let us define $last=N_{H}(t_i^q)$ as the index of the last corridor eVTOL. Similarly, if $ x'_{h_{last}}(t_i^q) \ge -L_{o}+2L_s$, $ I^{-} $ is a ``virtual'' eVTOL with dynamics:  
\[
V_{I^{-}}(t) = V_{h_{last}}(t), \quad 
x'_{I^{-}}(t) = -L_o+ \int_{t_i^q}^{t} V_{I^{-}}(\tau) \mathrm{d}\tau, \quad 
z'_{I^{-}} = (L_c-L_d) \sec\phi_i
\]
This leads to the extended set:
\begin{equation}\label{eq2}
H_e(t_i^q) =
\begin{cases}
H(t_i^q) \cup \left\{ I^{+}, I^{-} \right\}, & \text{if } x'_{h_1}(t_i^q) \leq -2L_s \text{ and } x'_{h_{last}}(t_i^q) \ge -L_o + 2L_s \\
H(t_i^q) \cup \left\{ I^{+} \right\}, & \text{if } x'_{h_1}(t_i^q) \leq -2L_s \text{ and } x'_{h_{last}}(t_i^q) < -L_o + 2L_s\\ 
H(t_i^q) \cup \left\{ I^{-} \right\}, & \text{if } x'_{h_1}(t_i^q) > -2L_s \text{ and } x'_{h_{last}}(t_i^q) \ge -L_o + 2L_s \\
H(t_i^q), & \text{otherwise}
\end{cases}
\end{equation}
Similarly, the set $H_e(t_i^q)$ is sorted in ascending order based on their distance from the vertiport.
Then, we can construct all candidate merging pairs ($h_k$,\ $ h_{k+1} $) in observation zone at time $t_i^q$, with $k\in\{1,2,...,N_{H_e}(t_i^q)-1\}$ and $N_{H_e}(t_i^q)$ denotes the number of elements in set $H_e(t_i^q)$.
We then develop the HCTMM strategy to coordinate the take-off and merging processes of eVTOLs, ensuring both safety and efficiency (as depicted in \hyperref[fig:08]{Figure \ref*{fig:08}}).

\begin{figure}[h]
    \centering
    \includegraphics[width=0.8\textwidth]{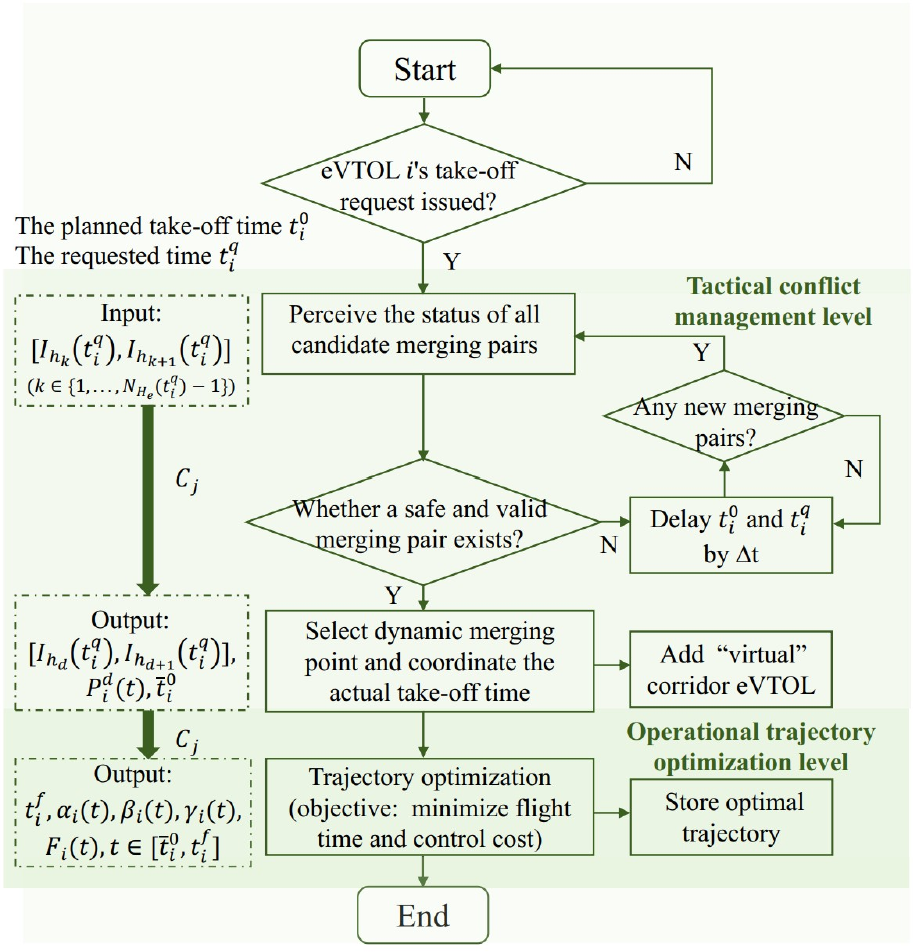}
    \caption{Framework of the HCTMM strategy.}
    \label{fig:08}
\end{figure}

In this paper, we impose the following assumptions:
\begin{assumption}
\label{ass:01}
For each corridor eVTOL $ h_k $, where $ k \in \{1,2,\dots,N_H(t)\} $, its speed is adjusted once upon entering the observation zone to ensure a safe cruising speed $ V_{h_k} $, which it then maintains constantly throughout the observation zone and TM section.
\end{assumption}
To enhance safety and reduce energy consumption, maintaining a constant speed for eVTOLs within a corridor represents an ideal state \citep{wang2021optimal,wu2022convex}. During the take-off and merging phases, constant-speed flight facilitates coordination and simplifies the problem with better predictability. 
In practice, to enforce \autoref{ass:01}, the coordinator applies a simple rule-based mechanism to regulate each corridor eVTOL's speed once, precisely when it enters the observation zone. The adjusted speed ensures safe separation from the preceding vehicle within both the observation and TM sections:
\begin{equation}
    v_{h_k}^{x'} = \min\left( v_{h_k}^{x'}(t_e), \frac{v_{h_{k-1}}^{x'}(L_m+L_o - L_s)}{L_m - x'_{h_{k-1}}(t_e)} \right), \quad v_{h_k}^{z'} = v_{h_k}^{z'}(t_e)
\end{equation}
Here, $ t_e $ denotes the time when corridor eVTOL $ h_k $ reaches the entrance of the observation zone and $L_m$ denotes the width of the TM section.

\begin{assumption}\label{asm:no delay}
    There are no communication delays or errors between eVTOLs. All eVTOLs comply with the control policy issued by the coordinator. 
\end{assumption}
\autoref{asm:no delay} ensures reliable communication between eVTOLs and the coordinator. This is a typical setting in the literature on eVTOLs and CAV trajectory planning \citep{zhou2018optimal,wu2022convex,jiang2022coordination}.

\section{Tactical conflict management}
\label{sec: Tactical conflict management}

At the tactical level, the vertiport coordinator selects dynamic merging points $P_{i}^{d}(t)$ and adjusts the actual take-off times $\bar{t}_{i}^{0}$ by leveraging the status of candidate merging pairs and the trajectories $C_j$ of eVTOLs $j \in \mathcal{J}$. Here, $\mathcal{J}$ represents the set of eVTOLs operating within the same TM section as eVTOL $i$ that have not yet completed the merging process at the planned take-off time $t_i^0$.
In this section, two key principles for scheduling are investigated.
The principle of safety is essential for merging control and is introduced in the following subsection.
Building on the principle of safety, the principle of validity—critical for coordinating the merging point and take-off time—is thoroughly examined.
Then we present a comprehensive algorithm for dynamic merging point selection and take-off time coordination for the merging eVTOL $i$. 

\subsection{Principles of safety and validity}
\label{sec:4.1}

To meet the safety separation criteria for corridor flight operations, the length of the merging pair $L_k(t)$ should be maintained above $2L_s$ to ensure both the safe merging maneuver of eVTOL $i$ and its conflict-free exit from the TM section. 
Specifically, the safety principle is given by $ L_k(t)\ge 2L_s, \; t \in (t_i^f, T_k^{ter}) $, where $ t_i^f $ represents the completion of the take-off and merging processes and $ T_k^{ter} $ denotes the time it takes for the corridor eVTOL $h_k$ to reach the endpoint ($ter$) of TM section, with $p^{ter}=[L_{m},(L_c-L_d) \sec\phi_i]$. Given \hyperref[ass:01]{Assumption \ref*{ass:01}}, we evaluate $ T_k^{ter} $ and $ L_k(t) $ as follows:

 \begin{equation}
    T_k^{ter} = \frac{L_m- x'_{h_k}(t)}{v_{h_k}^{x'}} + t, \; x'_{h_k}(t) \in [-L_o, L_m]
    \label{eq:10}
 \end{equation}
 \begin{equation}
\label{eq:A11}
    L_k(t) = x'_{h_k}(t) - x'_{h_{k+1}}(t) 
\end{equation}

There are three scenarios regarding the variation in the length of the merging pair:
\begin{enumerate}
    \item $ v_{h_k}^{x'} > v_{k_{h+1}}^{x'} $: $ L_{k}(t) $ is monotonically increasing. If $L_{k}(T_{k}^{ter}) \ge 2L_s$, the merging pair ($h_k$,\ $h_{k+1}$) satisfies the safety principle when the corridor eVTOL $h_k$ reaches the endpoint ($ter$), with $t_i^f=T_{k}^{ter}$.
    \item $ v_{h_k}^{x'} < v_{h_{k+1}}^{x'} $: $ L_{k}(t) $ is monotonically decreasing. If $ L_k(T_k^{ter}) \ge 2L_s $, the merging pair ($h_k$,\ $h_{k+1}$) meets the safety principle within the entire TM section.
    \item $ v_{h_k}^{x'} = v_{h_{k+1}}^{x'} $: $ L_{k}(t) $ remains constant.
\end{enumerate}
As discussed, if $L_{k}(T_{k}^{ter}) \ge 2L_s$, then the merging pair ($h_k$,\ $h_{k+1}$) is considered to be safe for supporting merging and conflict-free exit.

Due to the limits of maximum speed, thrust, and safe distance, it is necessary to estimate whether the merging eVTOL $i$ can accomplish the take-off and merging processes within the TM section. The assessment of these constraints is termed the principle of validity, which will be specified later in this section.  
Based on the principles of safety and validity, three types of merging pairs are defined as follows \citep{xue2022platoon}: 
\begin{enumerate}
\item A \textit{safe and valid merging pair} refers to merging pair ($h_k$,\ $h_{k+1}$) that satisfies the following conditions: (1) the length of merging pair ($h_k$,\ $h_{k+1}$) is sufficient to support safe merging and conflict-free exit operations, and (2) the merging pair ($h_k$,\ $h_{k+1}$) allows eVTOL $i$ to complete both the take-off and merging processes before the pair leaves the TM section.
\item A \textit{failed merging pair} occurs when the merging pair satisfies condition (1) but not condition (2).
\item An \textit{unsafe merging pair} denotes a merging pair with insufficient length to satisfy condition (1).
\end{enumerate}

From the view of validity assessment for the merging pair ($h_k$,\ $h_{k+1}$), the merging eVTOL $i \in \mathcal{I}$ should finish the take-off and merging processes before the corridor eVTOL $h_k$ exits the
TM section. To achieve this, we first present the following definition:

\begin{definition}
\label{def:1}
    A candidate merging pair ($h_k$,\ $ h_{k+1} $) is said to be valid if the following conditions are satisfied:
    (i) There exists at least one feasible point $p_f = [x'_f,z'_f] $ satisfying $z'_f= (L_c-L_d) \sec \phi_i $ and $ 0 \leq x'_f \leq L_m $;
    (ii) The travel time $T_k^{f}$ for corridor eVTOL $h_k$ to reach the point $p_f$ must satisfies $ T_k^{f} \ge \bar{t}_{ki}^{lf} $. Here, $ \bar{t}_{ki}^{lf} $ represents the minimum time for merging eVTOL $i$ to reach the position $ p_{lf} = [x'_f-L_s, (L_c-L_d) \sec \phi_i] $.
\end{definition}

\begin{remark}
    It is notable that the minimum time $ \bar{t}_{ki}^{lf}$ is subject to the conflict avoidance requirements imposed by corridor eVTOLs in merging pair $ (h_{k} $,\ $ h_{k+1}) $, as well as eVTOLs $j\in \mathcal{J}$ that coexist in the same TM section and have not completed the merging process. These requirements ensure that the merging eVTOL $i$ does not lead to conflicts in the take-off and merging processes.
\end{remark}

To rigorously assess the validity of the candidate merging pair per \hyperref[def:1]{Definition \ref*{def:1}}, we derive the necessary and sufficient condition in \hyperref[pro:01]{Proposition \ref{pro:01}}.
\begin{proposition} 
\label{pro:01} 

Consider corridor eVTOL $h_k$, whose motion satisfies \hyperref[ass:01]{Assumption~\ref{ass:01}} and adheres to the merging point design criteria specified in \autoref{sec:Trajectory optimization for take-off and merging control}, if a merging pair $(h_k,\ h_{k+1})$ satisfies the safety principle defined in \autoref{sec:4.1}, then the condition $T_k^{ter} \ge \bar{t}_{ki}^{a}$ is necessary and sufficient for $(h_k,\ h_{k+1})$ to be a valid merging pair.
Here, $ \bar{t}_{ki}^{a} $ denotes the minimum time required for eVTOL $ i $ to complete take-off and merging at the farthest allowable merging point $ p^{a} = [L_m - L_s, (L_c-L_d) \sec \phi_i] $.
This time is subject to the same constraints as $ \bar{t}_{ki}^{lf}$.
\end{proposition}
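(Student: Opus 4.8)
The plan is to collapse \autoref{def:1} onto a single scalar comparison and then show that the farthest admissible merging point $p^{a}$ is the decisive one. I would parametrize a candidate merge by the horizontal coordinate $x'_f$ of the feasible point $p_f$ on the corridor centerline, so the associated merging point is $p_{lf}=[x'_f-L_s,(L_c-L_d)\sec\phi_i]$, and I would measure the window available to eVTOL $i$ by
\[
g(x'_f):=T_k^{f}(x'_f)-\bar t_{ki}^{lf}(x'_f),
\]
where $T_k^f(x'_f)$ is \eqref{eq:10} evaluated at $x'=x'_f$ and $\bar t_{ki}^{lf}(x'_f)$ is the constrained minimum-time value in \autoref{def:1}(ii). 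With this notation, condition (ii) reads $g(x'_f)\ge 0$, the farthest admissible point $p^{a}$ corresponds to $x'_f=L_m$, and the target inequality $T_k^{ter}\ge\bar t_{ki}^{a}$ is exactly $g(L_m)\ge 0$. Hence the proposition reduces to showing that $g$ attains its maximum over the admissible range at $x'_f=L_m$, which I would obtain by proving that $g$ is nondecreasing in $x'_f$.

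Granting monotonicity, both implications are immediate. For sufficiency, if $g(L_m)\ge 0$ then $x'_f=L_m$ itself furnishes a feasible point meeting \autoref{def:1}(i)--(ii), so $(h_k,h_{k+1})$ is valid. For necessity, validity supplies some admissible $x'_f\le L_m$ with $g(x'_f)\ge 0$; since $g$ is nondecreasing, $g(L_m)\ge g(x'_f)\ge 0$, i.e. $T_k^{ter}\ge\bar t_{ki}^{a}$.

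The core step is the monotonicity of $g$, which I would establish by comparing the two marginal rates in $x'_f$. Because \autoref{ass:01} fixes $h_k$ at the constant horizontal speed $v_{h_k}^{x'}$, \eqref{eq:10} gives $\mathrm{d}T_k^f/\mathrm{d}x'_f=1/v_{h_k}^{x'}$ exactly. For $\bar t_{ki}^{lf}$ I would prove the one-sided bound $\bar t_{ki}^{lf}(x''_f)\le \bar t_{ki}^{lf}(x'_f)+(x''_f-x'_f)/v_{h_k}^{x'}$ for $x'_f\le x''_f$ by a construction: take a time-optimal trajectory reaching $p_{lf}(x'_f)$ — which, by the merging-point design criteria of \autoref{sec:Trajectory optimization for take-off and merging control}, enters aligned with the pair at horizontal speed $v_{h_k}^{x'}$ — and append a constant-speed coast along the centerline at $v_{h_k}^{x'}$ covering the extra distance $x''_f-x'_f$. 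This yields a feasible trajectory to $p_{lf}(x''_f)$ costing exactly the coast time more, which upper-bounds the minimum. Subtracting the two rates gives $g(x''_f)-g(x'_f)\ge 0$.

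I expect the main obstacle to be verifying that this coasting extension never violates a constraint embedded in $\bar t_{ki}^{lf}$. Moving at exactly $v_{h_k}^{x'}$, eVTOL $i$ stays in lockstep with $h_k$, preserving its $L_s$ gap; and under the safety principle the pair length satisfies $L_k(t)\ge 2L_s$ on the relevant interval (directly in the monotone-increasing and constant cases of \autoref{sec:4.1}, and in the decreasing case because the minimum is attained at $T_k^{ter}$), so the clearance to $h_{k+1}$ remains $L_k(t)-L_s\ge L_s$. The delicate points that require care are (a) the constraints imposed by the coexisting eVTOLs $j\in\mathcal{J}$, which must be shown not to be activated by the centerline coast, or to enter the definitions of $\bar t_{ki}^{lf}$ and $\bar t_{ki}^{a}$ identically; and (b) the case in which the time-optimal trajectory reaches $p_{lf}(x'_f)$ strictly before $h_k$ attains $p_f$, where the coast must be confirmed conflict-free despite the temporal slack. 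Establishing (a)--(b) rigorously is the crux; the remainder is bookkeeping around \eqref{eq:10} and the velocity-ordering scenarios of \autoref{sec:4.1}.
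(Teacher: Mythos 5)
Your proof takes essentially the same route as the paper's: sufficiency by instantiating Definition~3.1 with $p_f=p^{ter}$ and $p_{lf}=p^{a}$, and necessity by appending a constant-speed coast at $v_{h_k}^{x'}$ in lockstep $L_s$ behind $h_k$ to a feasible merge, which is exactly the paper's bound $\bar{t}_{ki}^{a}\le t_i^f+\bigl(L_m-L_s-{x'}^k(t_i^f)\bigr)\bigl(v_{h_k}^{x'}\bigr)^{-1}=T_k^{ter}$ --- your monotonicity of $g$ is this same computation in parametrized form. The feasibility caveats (a)--(b) you flag are likewise left implicit in the paper, which relies on the safety principle ($L_k(t)\ge 2L_s$ through $T_k^{ter}$) and on the dynamic-merging-point construction (the merge completion is tied to $h_k$'s position via the boundary conditions, which removes the temporal slack in your point (b)).
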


\begin{proof}
We now prove that the condition $T_k^{ter} \geq \bar{t}_{ki}^{a}$ in \hyperref[pro:01]{Proposition~\ref*{pro:01}} constitutes a necessary and sufficient condition to support the candidate merging pair ($h_k$,\ $h_{k+1}$) is valid per \hyperref[def:1]{Definition \ref*{def:1}}.

\textbf{Sufficiency  ($\Rightarrow$):} 
 We can identify the endpoint $p^{ter}$ of the TM section as the point $p_f$ and $ p^{a}$ as $ p_{lf}$. If $T_k^{ter} \geq \bar{t}_{ki}^{a}$ holds. Then condition (ii) of \hyperref[def:1]{Definition \ref*{def:1}} is satisfied. Since the coordinates of the endpoint are $p^{ter}=[L_{m}, (L_c-L_d) \sec\phi_i]$, condition (i) is also satisfied, thereby the merging pair ($h_k$,\ $ h_{k+1} $) satisfies \hyperref[def:1]{Definition \ref*{def:1}}.

\textbf{Necessity ($\Leftarrow$):} 
Conversely, for a valid merging pair ($h_k$,\ $ h_{k+1} $), the merging eVTOL $i$ will be able to merge into the corresponding merging point $P^k(t_i^f)=[{x'}^k(t_i^f),(L_c-L_d) \sec\phi_i]$ at time $t_i^f$. 
According to the design principle of merging points in \autoref{sec:Trajectory optimization for take-off and merging control}, the corridor eVTOL $h_k$ reaches the position $[{x'}^k(t_i^f)+L_s,(L_c-L_d) \sec\phi_i]$ at time $t_i^f$.
Given $ 0 \leq {x'}^k(t_i^f)+L_s \leq L_m$ and \hyperref[ass:01]{Assumption \ref*{ass:01}}, we have $ t_i^f + (L_m - {x'}^k(t_i^f)-L_s)(v_{h_k}^{x'})^{-1} = T_k^{ter} $. 
From the definition of $\bar{t}_{ki}^{a}$ and eVTOL $ i $ maintains the constant speed $V_{h_k}$ after completing the merging process, the shortest time for eVTOL $ i $ to reach the farthest allowable merging point satisfies 
$\bar{t}_{ki}^{a}\leq t_i^f+(L_m- L_s-{x'}^k(t_i^f) )(v_{h_k}^{x'})^{-1}= T_k^{ter}$. 

Hence, $ T_k^{{ter}} \geq \bar{t}_{ki}^{a} $ is both necessary and sufficient for the validity of the candidate merging pair $ (h_k,\ h_{k+1}) $. The proof is completed.
\end{proof} 

\subsection{Optimal merging point selection and take-off time coordination}
\label{sec: 3.2}

To support the validity assessment of a merging pair, the minimum time $ \bar{t}_{ki}^{a} $ needs to be determined. This is achieved by dividing the entire operation into two sequential phases: vertical take-off and take-off climb, as described in \autoref{sec: Take-off airspace design}. The shortest time required for the vertical take-off phase, denoted by $t_i^v$, is readily obtainable as follows:
\begin{equation}
    t_i^v=t_i^0+\frac{m_i  v_{TOSS}}{F_i^{\max}-m_ig}+\frac{L_d}{ v_{TOSS}}-\frac{m_i v_{TOSS}}{2(F_i^{\max}-m_ig)}
\end{equation}
where the positive parameter $F_i^{\max}$ represents the maximum available net thrust. While the shortest time for the take-off climb phase, starting from the transition point $D_i$ and merging into the farthest allowable merging point $p^a$ is difficult to obtain directly. Therefore, we formulate an optimal control problem to generate the shortest-time trajectory for the take-off climb and merge processes. This formulation directly determines the minimum achievable time $\bar{t}_{ki}^{a}$.

1) \textit{Simplified eVTOL dynamics and mechanical constraints:} We denote the longitudinal position, vertical position, longitudinal speed, vertical speed, tip-path-plane pitch angle, mass and net thrust of eVTOL $i$ at time $t$ as $x'_i(t)$, $ z'_i(t)$, $ v_{i}^{x'}(t)$, $v_{i}^{z'}(t)$, $\theta_i(t)$, $m_i$ and $F_i(t)$, respectively. 
Through the structured take-off airspace design, we can simplify the system dynamics as follows (refer to \autoref{sec: Appendix}): 
\begin{subequations}
\begin{gather}
\dot{x'_i}(t)=v_{i}^{x'} (t)\\
\dot{z'_i}(t)=v_{i}^{z'} (t)\\
\dot{v}_{i}^{x'}(t)=\frac{F_i(t)}{m_i}\sqrt{1 - \left(\frac{m_i g \sin\phi_i}{F_i}\right)^2}\sin(\theta_i(t))\\
\dot{v}_{i}^{z'}(t)=\frac{F_i(t)}{m_i}\sqrt{1 - \left(\frac{m_i g \sin\phi_i}{F_i}\right)^2}\cos(\theta_i(t))-g \cos\phi_i
\end{gather}
\label{eq:3}
\end{subequations}

We denote the maximum total speed of eVTOL $i$ as $V_i^{\max}$ $(V_i^{\max} > 0)$. Mechanical constraints of each eVTOL can be cast as
\begin{subequations}
\begin{gather}
\sqrt{(v_{i}^{x'}(t))^2+ (v_{i}^{z'}(t))^2}  \le V_i^{\max},  \hspace{2em} \forall t\in[t_i^v,\bar{t}_{ki}^{a}]  \label{eq:sub_6a} \\
-\pi \le {\theta_i}(t)   \le \pi,\hspace{2em} \forall t\in[t_i^v,\bar{t}_{ki}^{a}]\label{eq:sub_6b}\\
0\le {F_i}(t) \le \sqrt{(F_i^{\max})^2-(m_i g \sin\phi_i)^2},\hspace{2em} \forall t\in[t_i^v,\bar{t}_{ki}^{a}]\label{eq:sub_6c}
\end{gather}
\label{eq:4}
\end{subequations}
Besides, to ensure realistic vertical climb performance during the take-off climb and merging processes, an upper bound for the rate of climb (RoC) constraint is introduced. The RoC represents the vertical component of the eVTOL's speed \citep{anderson1999aircraft} and is defined as $v_{i}^{z'}(t)\cos\phi_i$ in the transformed $X_i'D_iZ_i'$ coordinate system. The constraint can be written as
\begin{gather}
    v_{i}^{z'}(t)\cos\phi_i<RoC_{\max},\hspace{2em} \forall t\in[t_i^v,\bar{t}_{ki}^{a}]\label{eq:4.5}
\end{gather}
where $RoC_{\max}$ is the maximum rate of climb.

2) \textit{Safety constraints:} Aircraft safety is the top concern for take-off climb and merging processes. Within TM sections, the primary safety concern for eVTOL $i$ is the potential collision with corridor eVTOLs $h_k$, $h_{k+1}$ in the merging pair, as well as with eVTOLs $j \in \mathcal{J}$ that have not yet completed the merging process and remain within the same TM section.
In light of \cite{wu2022convex}, we introduce $s_f$ to indicate the predefined safe distance between two eVTOLs. For the minimum safe gap between the adjacent corridor eVTOLs $L_s$, we have $ L_s \geq s_f $. Now, we define collision avoidance constraints for eVTOL $i$ during the take-off climb and merging processes as
\begin{equation}\label{eq:6}
(x'_i(t)-x'_{n}(t))^2+(z'_i(t)-z'_{n}(t))^2\ge s_f^2,\hspace{1.5em} \forall t\in[t_i^v,\bar{t}_{ki}^{a}],\; n\in \mathcal{J} \cup\{h_k,h_{k+1}\}
\end{equation}
Building on the TM section design concept outlined in \autoref{sec: Take-off airspace design}, we restrict operation within the designated sections to mitigate obstacle avoidance challenges across distinct corridor-vertiport pairs. The trajectory range constraints can be expressed as follows: 
\begin{subequations}
\begin{gather}
0\le {x'_i}(t) \le L_m,\hspace{1.5em} \forall t\in[t_i^v,\bar{t}_{ki}^{a}]\\
0\le {z'_i}(t) \le (L_c-L_d) \sec\phi_i,\hspace{1.5em} \forall t\in[t_i^v,\bar{t}_{ki}^{a}]\label{eq:sub_c8}\\
{x'_i}(t)\tan \Theta_{dep} \le {z'_i}(t)\cos\phi_i,\hspace{1.5em} \forall t\in[t_i^v,\bar{t}_{ki}^{a}]
\end{gather}
\label{eq:5}
\end{subequations}
where $\Theta_{dep}$ is obstacle limitation surface \citep{EasyAccess2024}.

3) \textit{Boundary constraints:} 
The starting and terminal positions of the shortest-time trajectory are the transition point $D_i$ and the farthest allowable merging point $p^a$, respectively. 
The shortest-time trajectory continues until the eVTOL $i$ merges into the candidate merging pair $(h_k,\ h_{k+1})$ at point $p^a$, while maintaining the same speed as the corridor eVTOL $h_{k}$. These constraints can be expressed as follows:
\begin{equation}
x'_i(t_i^v)=0, \;\;\; z'_i(t_i^v)=0,\;\;\; v_{i}^{x'}(t_i^v)=0,\;\;\; v_{i}^{z'}(t_i^v)=v_{TOSS}
\label{eq:7}
\end{equation}
\begin{equation}
x'_i(\bar{t}_{ki}^{a})=L_m-L_s,\;\;\; z'_i(\bar{t}_{ki}^{a})=(L_c-L_d) \sec\phi_i,\;\;\;v_{i}^{x'}(\bar{t}_{ki}^{a})=v_{h_k}^x,\;\;\; v_{i}^{z'}(\bar{t}_{ki}^{a})=0
\label{eq:B1}
\end{equation}

Summarizing \eqref{eq:3}-\eqref{eq:B1}, we define a free-terminal-time optimal control problem for the shortest-time candidate trajectory as follows:
\begin{problem}\label{prob:1}
\begin{subequations}
\begin{gather}
    \min_{\substack{F_{i}(t), \theta_i(t)}} J_i = \bar{t}_{ki}^{a}-t_i^v\\
\text{subject to: \eqref{eq:3}-\eqref{eq:B1}}
\end{gather}
\end{subequations}
\end{problem}
 
The decision variables are:
the net thrust of eVTOL $i$, i.e., $F_i(t)$, and the tip-path-plane pitch angle, i.e., $\theta_{i}(t)$. The constraints \eqref{eq:3}-\eqref{eq:B1} include system dynamics, aircraft mechanical limitations, and safety constraints, respectively. 
As discussed, the solution $ \bar{t}_{ki}^{a}$ forms the basis for the validity assessment of the candidate merging pair $(h_k,\ h_{k+1})$. 

\begin{algorithm}[!h]
\caption{Merging point selection and take-off time coordination for merging eVTOL $i \in \mathcal{I}$}
\label{alg:01}
        \KwIn{  $t_{i}^{0}$: The planned take-off time\\
                \setlength{\parindent}{3.3em}$t_{i}^{q}$: The requested take-off time\\
                \setlength{\parindent}{3.1em}$\Delta  t$: The predefined take-off delay interval\\
                    \setlength{\parindent}{3.1em}$\mathcal{J}$: The set of eVTOLs in the same TM section as eVTOL $i$ that have not merged at time $t_i^0$ }
	\KwOut{
                $[I_{h_d}(t_i^q),\ I_{h_{d+1}}(t_i^q)]$: The status of per-target merging pair at time $t_i^q$, where:\\
                \setlength{\parindent}{6.2em}$I_{h_d}(t)=\{[x'_{h_d}(t),z'_{h_d}(t)],[v_{h_d}^{x'}(t),v_{h_d}^{z'}(t)]\} $\\
                \setlength{\parindent}{4.2em}$P_i^d(t)$: The dynamic merging point, $P_i^d(t)=[{x'}_i^d(t),(L_c-L_d) \sec\phi_i]$ \\
                \setlength{\parindent}{4.2em}$\bar{t}_{i}^{0}$: The actual take-off time}    

\Begin{
    Initialize: Determine $H(t_{i}^{q}), N_{H}(t_{i}^{q}), H_{e}(t_{i}^{q}), N_{H_e}(t_{i}^{q}), C_j$
    
    Perceive the status of all candidate merging pairs at time $t_{i}^{q}$: $[I_{h_k}(t_i^q),\ I_{h_{k+1}}(t_i^q)]$, where $k \in \{1,2,...,N_{H_e}(t_{i}^{q})-1\}$
    
    \For{$k=1$ \KwTo $N_{H_e}(t_{i}^{q})-1$}{
        Predict the time $T_{k}^{ter}$ for the corridor eVTOL $h_k$ to reach the endpoint by \eqref{eq:10}
        
        Calculate the length of merging pair $L_{k}(T_{k}^{ter})$ by \eqref{eq:A11}
        
        \If{$L_{k}(T_{k}^{ter}) \ge 2L_s$}{
                Merging pair ($h_k$,\ $ h_{k+1} $) satisfies the safety principle
                
                Calculate the time $\bar{t}_{ki}^{a}$ by \autoref{prob:1}
                
                \If{$\bar{t}_{ki}^{a} \le T_{k}^{ter}$}{
                    Merging pair ($h_k$,\ $ h_{k+1} $) satisfies the validity principle 
                    
                    Determine the per-target merging pair $(h_d,\ h_{d+1})=(h_k,\ h_{k+1})$ and the dynamic merging point $P_i^d(t)=P^k(t)$, set the actual take-off time: $\bar{t}_{i}^{0} = t_{i}^{0}$

                    Add a ``virtual''  corridor eVTOL at the dynamic merging point
                    
                    \Return result, break
                }
            }
    }
    Delay the planned take-off time until $t_{i}^{0}+\Delta  t$, determine $t_{i}^{q}+\Delta  t$, $H_e(t_{i}^{q}+\Delta  t)$ and $ N_{H_e}(t_{i}^{q}+\Delta  t)$

    Define $N_{new}= N_{H_e}(t_{i}^{q}+\Delta  t)$ and $N_{old}= N_{H_e}(t_{i}^{q})$
    
    Perceive the status of eVTOL $h_{N_{new}}$ at time $t_{i}^{q}+\Delta  t$: $I_{h_{N_{new}}}(t_{i}^{q}+\Delta  t)$
    
     $t_{i}^{q} = t_{i}^{q} + \Delta  t$
     
     $t_{i}^{0} = t_{i}^{0} + \Delta  t$   
    
    \If{$x'_{h_{N_{new}}}(t_{i}^{q})<x'_{h_{N_{old}}}(t_{i}^{q})$}{
        \Return to Step 2
    }
    Return to Step 15
}
\end{algorithm}

\hyperref[alg:01]{Algorithm \ref*{alg:01}} outlines the process for selecting the dynamic merging point and coordinate take-off time per the principles discussed above. Considering the planning efficiency of vertiports, each merging eVTOL must submit a request at least $ T $ seconds before its planned take-off time $ t_i^0 $. Here, $ T $ denotes the planning horizon, and $ t_i^q $ represents the requested take-off time for merging eVTOL $ i $.
Based on the status information $[I_{h_k}(t),\ I_{h_{k+1}}(t)]$, candidate merging pairs $ (h_k,\ h_{k+1})$ are sequentially evaluated for safety and validity, starting from the first pair (i.e., the merging pair $ (h_1,\ h_2)$).
Steps 5-7 involve the safety assessment. 
Steps 9-10 involve the valid assessment.
To maximize the operational efficiency for eVTOL $i$, the first safe and valid merging pair in the observation zone is set as the per-target merging pair $(h_d,\ h_{d+1})=(h_k,\ h_{k+1})$.  This rule prioritizes early merging opportunities, which contribute to less steep climb profiles and reduced take-off delays. As will be validated in our experiments, this heuristic selection yields results consistent with those obtained by rule-based exhaustive search aiming to minimize total flight time and control cost. Then, we choose the corresponding dynamic merging point as the target merging point $ P_{i}^{d}(t) = P^{k}(t) $ for the merging eVTOL $ i $.
Meanwhile, the merging eVTOL $ i $ is able to take-off at the planned time, implying that $ \bar{t}_{i}^{0} = t_{i}^{0} $. If none of the candidate merging pairs satisfy safety and viability principles, it may be necessary to delay the planned take-off time of eVTOL $i$, allowing new eVTOLs to enter the observation zone and creating new pairs for selection. Steps 15-20 execute the aforementioned operations.

\section{Operational trajectory optimization}
\label{sec: Operational deconfliction control}

The actual take-off time $\bar{t}_{i}^{0}$, the status of per-target merging pair $[I_{h_d}(t_i^q),\ I_{h_{d+1}}(t_i^q)]$, and the dynamic merging point $P_i^d(t)=[{x'}^d_i(t),(L_c-L_d)\sec\phi_i]$, as derived in \autoref{sec: Tactical conflict management}, are fed to the operational trajectory optimization. 
This section presents the optimization strategy for providing a control cost and time-optimal take-off and merging trajectory at the operational level.

For the vertical take-off phase, the eVTOL is required to perform a rapid vertical ascent to minimize exposure time in low-altitude high-risk zones and reach the take-off safety speed $V_{TOSS}$, ensuring stable flight conditions \citep{EasyAccess2024}.
Accordingly, the trajectory in this phase is characterized by applying the maximum allowable acceleration until $V_{TOSS}$ is reached, followed by the constant-velocity ascent to the predefined transition point altitude. Therefore, the time to reach the transition point, denoted as $\bar{t}_{i}^{v}$, is computed similarly as described in \autoref{sec:4.1}.
For the take-off climb phase, in light of \cite{wu2022convex,armijos2024cooperative}, we consider two objectives for the trajectory optimization problem for each eVTOL $i$. First, we minimize the completion time $t_i^f$. Second, we minimize the control cost for the eVTOL to accomplish the take-off climb and merging processes. In this paper, we define the control cost in terms of net thrust, which is one of the most commonly used cost functions in eVTOL trajectory optimization \citep{wu2022convex, wu2024convex, shen2023convex, zhang2024collision}.
Similar to the shortest-time trajectory investigated in \autoref{sec:4.1}, the take-off climb and merging trajectory must satisfy the vehicle and safety constraints \eqref{eq:3}-\eqref{eq:5} within the time domain $[\bar{t}_{i}^{v},t_{i}^{f}]$.
The completion of take-off climb and merging processes is marked by eVTOL $i$ reaching the merging point $P_{i}^{d} (t_i^f)$ and keeping the same speed of the corridor eVTOL $h_d$ at time $t_i^f$.
Accordingly, boundary constraints for the process can be expressed as follows:
\begin{equation}
x'_i(t_i^f)={x'}_i^d(t_i^f),\;\;\; z'_i(t_i^f)=(L_c-L_d)\sec\phi_i,\;\;\;
v_{i}^{x'}(t_i^f)=v_{h_d}^{x'},\;\;\; v_{i}^{z'}(t_i^f)=0
\label{eq:12}
\end{equation}
The horizontal position of merging point ${x'}_i^d(t_i^f)$ can be predicted based on $x'_{h_d}(t_i^q)$, which is included in status $I_{h_d}(t_i^q)$. ${x'}_i^d(t_i^f)$ is expressed as follows.
\begin{equation}
         {x'}^{d}_i(t_i^f)= x'_{h_d}(t_{i}^{q})+v_{h_d}^{x'}(t_i^f-t_{i}^{q})-L_s
         \label{eq:13}
    \end{equation}
Based on the TM section design concept outlined in \autoref{sec: Take-off airspace design}, the trajectories of merging eVTOLs operating across different vertiport-corridor pairs will inherently maintain safe separation. Consequently, the collaborative trajectory planning problem of multiple merging eVTOLs can be transformed into a series of individual trajectory optimization problems, improving planning efficiency while ensuring safety in trajectory execution.
The control cost and time-optimal take-off climb and merging trajectory optimization problem for each merging eVTOL $i \in \mathcal{I}$ can be formulated as an optimal control problem with free terminal time and terminal states that depend on the terminal time, as described in \autoref{prob:2}.
The feasibility of \autoref{prob:2} is guaranteed by evaluating the principles of safety (as specified in \eqref{eq:10} and Step 7 of \hyperref[alg:01]{Algorithm~\ref*{alg:01}}) and validity (as specified in \autoref{prob:1} and Step 10 of \hyperref[alg:01]{Algorithm~\ref*{alg:01}}).

\begin{problem}\label{prob:2}
\begin{subequations}
\begin{gather}
    \min_{\substack{F_{i}(t), \theta_i(t)}} J_i =\int_{\bar{t}_{i}^{v}}^{t_{i}^{f}} \bigg[\frac{1}{2}\bigg(\frac{F_{i}(t)}{m_i}\bigg)^{2}+\lambda\bigg] dt \\
    \text{subject to: \eqref{eq:3}, \eqref{eq:7}} ,\eqref{eq:12}, \eqref{eq:13}\\
    0\le {x'_i}(t) \le L_m,\hspace{1.5em} \forall t\in[\bar{t}_{i}^{v},t_{i}^{f}] \label{eq:sub_c1} \\
    0\le {z'_i}(t) \le (L_c-L_d) \sec\phi_i,\hspace{1.5em} \forall t\in[\bar{t}_{i}^{v},t_{i}^{f}] \label{eq:sub_c2} \\
    {x'_i}(t)\tan \Theta_{dep} \le {z'_i}(t)\cos\phi_i ,\hspace{1.5em} \forall
    t\in[\bar{t}_{i}^{v},t_{i}^{f}]  \label{eq:sub_c7} \\
    \sqrt{(v_{i}^{x'}(t))^2+ (v_{i}^{z'}(t))^2}  \le V_i^{\max},\hspace{2em} \forall t\in[\bar{t}_{i}^{v},t_{i}^{f}]  \label{eq:sub_c3} \\
    v_{i}^{z'}(t)\cos\phi_i<RoC_{\max},\hspace{2em} \forall t\in[\bar{t}_{i}^{v},t_{i}^{f}]  \label{eq:sub_c6}\\
    -\pi \le {\theta_i}(t)   \le \pi,\hspace{2em} \forall t\in[\bar{t}_{i}^{v},t_{i}^{f}] \label{eq:sub_c4} 
\end{gather}
\end{subequations}
\newpage
\begin{subequations}
\begin{gather}
    0\le {F_i}(t) \le \sqrt{(F_i^{\max})^2-(m_i g \sin\phi_i)^2},\hspace{2em} \forall t\in[\bar{t}_{i}^{v},t_{i}^{f}]  \label{eq:sub_c5} \tag{19i} \\
    (x'_i(t)-x'_{n}(t))^2+(z'_i(t)-z'_{n}(t))^2\ge s_f^2,\hspace{1.5em} \forall  t\in[\bar{t}_{i}^{v},t_{i}^{f}],\; n\in \mathcal{J} \cup\{d,d+1\}  \label{eq:sub_c7} \tag{19j}
\end{gather}
\end{subequations}
\end{problem}
\noindent where $\lambda$ is the time-to-control value conversion coefficient. The variables and constraints are the same as those of \autoref{prob:1}.

The formulation of \autoref{prob:1} and \autoref{prob:2} can be made more concise by introducing appropriately defined variables.
For this purpose, we introduce the following new variables:
\begin{equation}
\label{eq:456}
    u_{i}^{x'}(t)=\frac{F_i(t)}{m_i}\sqrt{1 - \left(\frac{m_i g \sin\phi_i}{F_i}\right)^2}\sin(\theta_i(t)),\quad u_{i}^{z'}(t)=\frac{F_i(t)}{m_i}\sqrt{1 - \left(\frac{m_i g \sin\phi_i}{F_i}\right)^2}\cos(\theta_i(t))
\end{equation}
Along with the above new variables, the control constraints \eqref{eq:sub_c4} and \eqref{eq:sub_c5} (or \eqref{eq:sub_6b} and \eqref{eq:sub_6c}) can be represented by:
\begin{equation}
\label{eq:123}
(m_iu_{i}^{x'}(t))^2+(m_iu_{i}^{z'}(t))^2  \le(F_i^{\max})^2- (m_i g \sin\phi_i)^2
\end{equation}
Subsequently, the original system dynamics described in \eqref{eq:3} are transformed into the following form:
\begin{subequations}
\begin{gather}
\dot{x'_i}(t)=v_{i}^{x'}(t)\\
\dot{z'_i}(t)=v_{i}^{z'}(t)\\
\dot{v}_{i}^{x'}(t)=u_{i}^{x'}(t)\\
\dot{v}_{i}^{z'}(t)=u_{i}^{z'}(t)-g \cos\phi_i
\end{gather}
\label{eq:333}
\end{subequations}
\autoref{prob:1} and \autoref{prob:2} can be reformulated as \autoref{prob:6} and \autoref{prob:4}, respectively, as follows:
\begin{problem}\label{prob:6}
\begin{subequations}
\label{eq:A}
\begin{gather}
     \min_{\substack{u_{i}^{x'}(t), u_{i}^{z'}(t)}} J_i =\bar{t}_{ki}^{a} - t_i^v \\
     \text{subject to: \eqref{eq:sub_6a}, \eqref{eq:4.5}-\eqref{eq:B1}, \eqref{eq:123}}, \eqref{eq:333}
\end{gather}
\end{subequations}
\end{problem}
\begin{problem}\label{prob:4}
\begin{subequations}
\label{eq:19}
\begin{gather}
     \min_{\substack{u_{i}^{x'}(t), u_{i}^{z'}(t)}} J_i =\int_{\bar{t}_{i}^{v}}^{t_{i}^{f}} \frac{1}{2}\bigg[ (u_{i}^{x'}(t))^2+(u_{i}^{z'}(t))^2+2\lambda\bigg] dt \\
     \text{subject to: \eqref{eq:7}} ,\eqref{eq:12}, \eqref{eq:13},\eqref{eq:sub_c1}-\eqref{eq:sub_c6},\eqref{eq:sub_c7},\eqref{eq:123},\eqref{eq:333}
\end{gather}
\end{subequations}
\end{problem}

Inspired by \cite{gao2021optimal,wu2022convex}, \autoref{prob:6} and \autoref{prob:4}
can be solved using the pseudo-spectral method, which has the advantages of low initial sensitivity and large convergence radius. The state-of-the-art General Pseudospectral Optimal Control Software (GPOPS-\uppercase\expandafter{\romannumeral2}) can be used to solve \autoref{prob:6} and \autoref{prob:4} \citep{patterson2014gpops}.


After achieving the optimal solutions of \autoref{prob:4}, it remains a new challenge to control the net thrust in the $Z_i'$-axis directly. To address this challenge, we project the solutions of \autoref{prob:4} back into the inertial reference frame $X_iY_iZ_i$ and original control variable, i.e., net thrust $ F_i $ and Euler angles $ (\alpha_i, \beta_i, \gamma_i) $.
\begin{subequations}
\begin{gather}
\label{eq:16}
F_i=\sqrt{(m_iu_{i}^{x'})^2+(m_iu_{i}^{z'})^2+(m_i g \sin\phi_i)^2}    \\
\tan\theta_i=\frac{u_i^{x'}}{u_i^{z'}}\\
\frac{F_i}{m_i}[\cos\phi_i(\cos\alpha_i sin\beta_i \sin\gamma_i - \sin\alpha_i \cos \gamma_i)-\sin\phi_i \cos\beta_i \cos\alpha_i]=-g\sin\phi_i\\
\sin\phi_i(\cos\alpha_i \sin\beta_i \sin\gamma_i - \sin\alpha_i \cos\gamma_i)+ \cos\phi_i \cos\beta_i \cos\alpha_i=\sqrt{1 - \left(\frac{m_i g \sin\phi_i}{F_i}\right)^2}\cos\theta_i\\
\cos\alpha_i \sin\beta_i \cos\gamma_i + \sin\alpha_i \sin\gamma_i=\sqrt{1 - \left(\frac{m_i g \sin\phi_i}{F_i}\right)^2}\sin\theta_i
\end{gather}
\end{subequations}
By leveraging the optimal control inputs, which include net thrust $ F_i $ and Euler angles $ (\alpha_i, \beta_i, \gamma_i) $, the coordinator can effectively guide the merging eVTOL $ i $ to execute the optimal take-off climb and merging trajectory with both efficiency and safety.
The optimal trajectory of merging eVTOL $i$ will be stored and may be used by $C_j$ to support the decision-making process for the subsequent merging eVTOLs in this TM section.

\section{Numerical experiments}
\label{sec: Numerical Experiments}

In this section, we present two case studies to evaluate the performance of the proposed airspace design and the HCTMM strategy. Case 1 considers a single corridor-vertiport pair. The corridor segment, aligned along the $X$-axis in the inertial coordinate system, has a height of $ L_c=305 \, \text{m}$. The vertiport is located at $ O_0=(0, 0, 0)$ and schedules take-off times at $t=[6,36,66,96,126,156]$ seconds. We validate the proposed strategy from the following three perspectives:
\begin{itemize}
    \item \textit{Safety assurance:} We first examine the safety constraints satisfaction of the proposed HCTMM strategy at different levels of traffic flow in the corridor.
    \item \textit{Travel efficiency improvement:} We compare the HCTMM strategy to the strategy that carries out autonomous take-offs with a fixed merging point (ATFM) using the first-come-first-served principle.
    \item \textit{Computational efficiency:} We compare the HCTMM strategy with two alternative dynamic merging point strategies, one obtained through greedy-like exhaustive search (GES) and the other through rule-based exhaustive search (RES).
\end{itemize}
Case 2 examines the scalability of the HCTMM strategy. We consider the coordination between multiple corridor-vertiport pairs. This scenario includes a corridor segment (configured identically to Case 1) and four densely distributed vertiports $(O_1,\, O_2,\, O_3,\, O_4)$, with their locations $O_i$ and corresponding take-off demand $r_i$ listed in \autoref{tab:02}. Case 2 also validates the effectiveness of the proposed airspace design in improving computational efficiency.

We obtain the numerical solutions of \autoref{prob:6} and \autoref{prob:4} using the GPOPS-\uppercase\expandafter{\romannumeral2} toolbox based on MATLAB R2022b\footnote{ \hyperref[tab:03]{Table \ref*{tab:03}} in the appendix outlines the parameters adopted in the pseudo-spectral method in GPOPS-II.}. Following \cite{wu2022convex,EasyAccess2024}, other parameters are given in \autoref{tab:02}.

\begin{table}[h!]
    \centering
    \begin{threeparttable}[b]
    \caption{Parameters of the simulation environment}
    \begin{tabular}{llllll}
        \toprule
        Parameter\footnotemark[1] & Value & Unit & Parameter\footnotemark[1] & Value & Unit \\
        \cmidrule(lr){1-3} \cmidrule(lr){4-6}
        $T_{\max}$ & 230 & $s$ & $g$ & 9.81 &$m/s^2$ \\
        $L_{o}$ & 600 & $m$ & $L_{m}$ & 1050 & $m$\\
        $L_{s}$ & 50 & $m$ & $L_{d}$ & 30.5 & $m$\\
        $s_{f}$ & 50 & $m$ & $T$ & 6 & $s$\\
        $v_{TOSS}$& 8 & $m/s$& $RoC_{\max}$& 9&$m/s$\\
        $\Theta_{dep}$&$2.58$& $\circ$& $\Delta  t$&0.2 &s\\
        $\lambda$ & 20 & - & $m_i$ & 240 & $kg$\\
        $F^{\max}_i$ & 4800 & $N$ & $V^{\max}_i$ & 40 & $m/s$\\ 
        $O_1$ & (0, 0, 0) & $m$ & $O_2$ & (700, 50, 0) & $m$\\
        $O_3$ & (700, -50, 0) & $m$ & $O_4$ & (800, 10, 20) & $m$\\
        $r_1$ & [11,46,81,116,151] & $s$ & $r_2$ & [16,56,96,136]  & $s$\\
        $r_3$ & [16,56,96,136] & $s$ & $r_4$ & [6,36,66,96,126,156]  & $s$\\
        \bottomrule
    \end{tabular}
    \begin{tablenotes}
     \item[1] Parameters with subscript $i$ are shared by all merging eVTOLs.
   \end{tablenotes}
   \label{tab:02}
   \end{threeparttable}  
\end{table}

\subsection{Case 1: A single corridor-vertiport pair under different corridor traffic flow levels}

In this case, the corridor traffic flow is modeled using a Bernoulli process. At each time step, a new eVTOL enters the observation zone with a probability $\rho$ governed by a Bernoulli distribution, provided that safety distance requirements are satisfied. The speed of corridor eVTOLs is randomly assigned within the range [17, 23]. At the initial time $ t_0 = 0 $, the horizontal positions of the eVTOLs in the observation zone are given by $[-50, -200, -320, -500, -600]$ m. During the simulation, the vertiport generates take-off requests for 6 merging eVTOLs (M1 to M6), while 39 eVTOLs in the light-traffic-flow scenario and 59 eVTOLs in the heavy-traffic-flow scenario pass through the observation zone.

\textit{(1) Safety assurances.} 
Traffic managers implement the HCTMM strategy with one of the primary objectives being to ensure flight safety.
\hyperref[fig:A1]{Figure \ref*{fig:A1}} presents the horizontal position trajectory of all merging eVTOLs under the HCTMM strategy. The red dots represent the merging point of each merging eVTOL, and the blue shaded areas denote the collision avoidance area. It can be observed that the tactical conflict management level ensures all dynamic merging points keep safe separation distances from both preceding and following corridor eVTOLs. 
Meanwhile, at the operational trajectory optimization level, merging eVTOLs are guided to reach their dynamic merging points while matching the speed of preceding corridor eVTOLs and satisfying mechanical and safety constraints. For example, as shown in \hyperref[fig:position]{Figure \ref*{fig:position}}-\hyperref[fig:F1]{Figure \ref*{fig:F1}}, merging eVTOL M1 successfully follows an optimized trajectory to achieve smooth and constraint-compliant take-off climb and merging processes.
Furthermore, the complete merging process of all eVTOLs under both light and heavy corridor traffic flow conditions is presented in the supplementary video\footnote{The supplementary video is available at: \url{https://youtu.be/VN5Ji4HSmCE}.}, providing a more intuitive visualization of safety validation. 

\begin{figure}[h!]
    \centering
    \begin{subfigure}[b]{0.49\textwidth}
        \centering
        \includegraphics[width=1.15\textwidth]{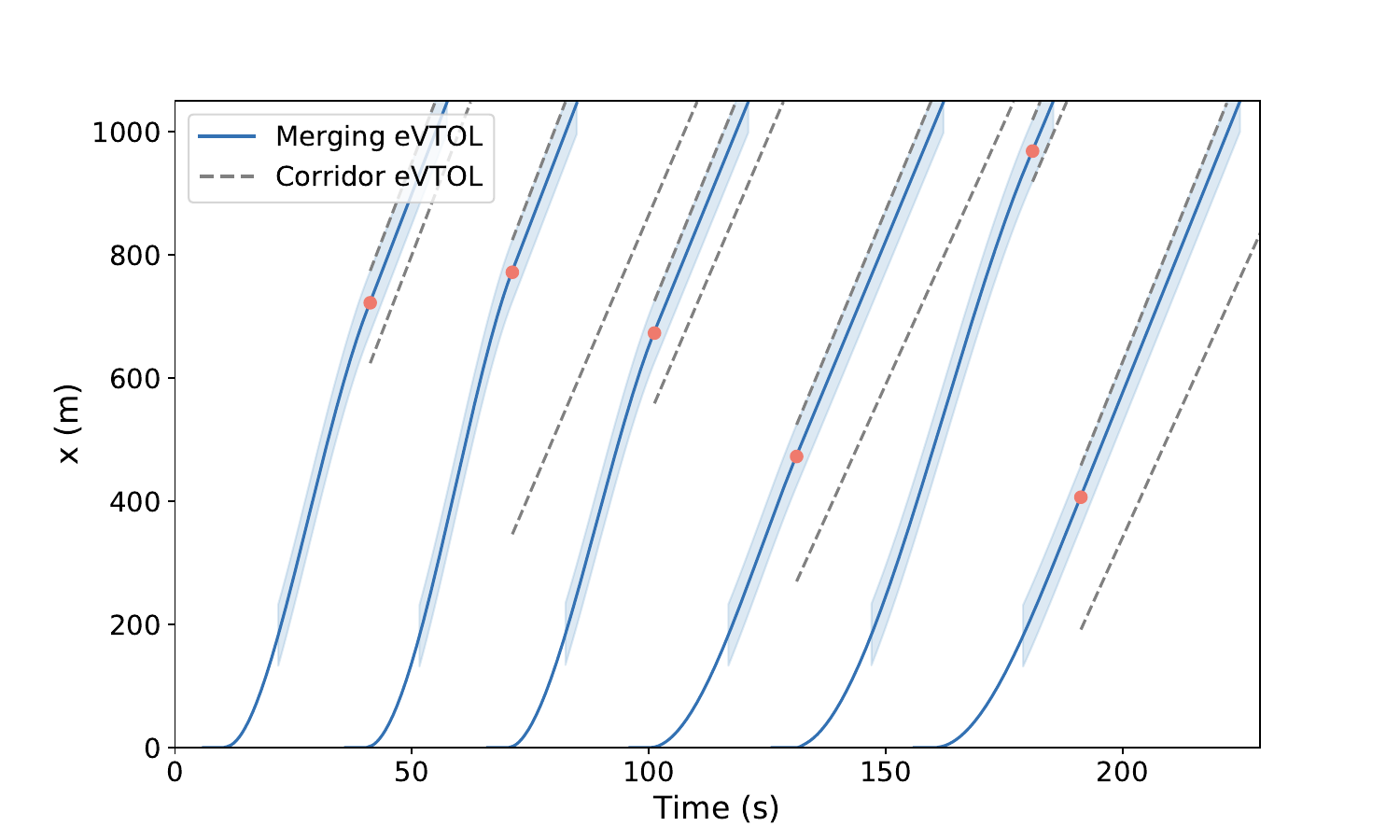}
        \caption{Light corridor traffic flow ($\rho=0.05$)}
        \label{fig:case1-ctdm}
    \end{subfigure}
    \hfill
    \begin{subfigure}[b]{0.49\textwidth}
        \centering
        \includegraphics[width=1.15\textwidth]{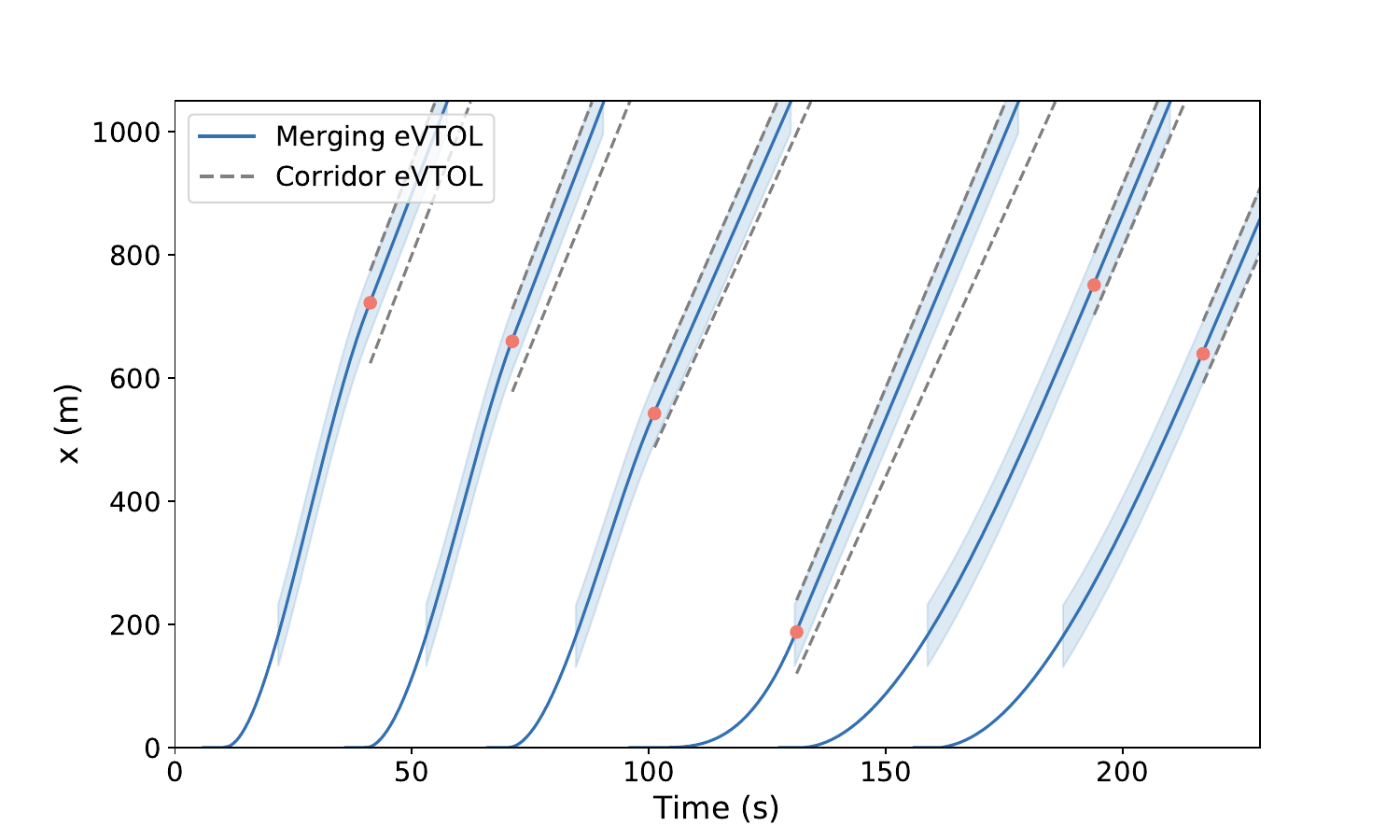}
        \caption{Heavy corridor traffic flow ($\rho=0.15$)}
        \label{fig:case2-ctdm}
    \end{subfigure}

    \caption{Horizontal position trajectory of merging eVTOLs under HCTMM strategy.}
    \label{fig:A1}
\end{figure}

\begin{figure}[h!]
    \centering
    \begin{subfigure}{0.49\textwidth}
        \centering \includegraphics[width=1.15\textwidth]{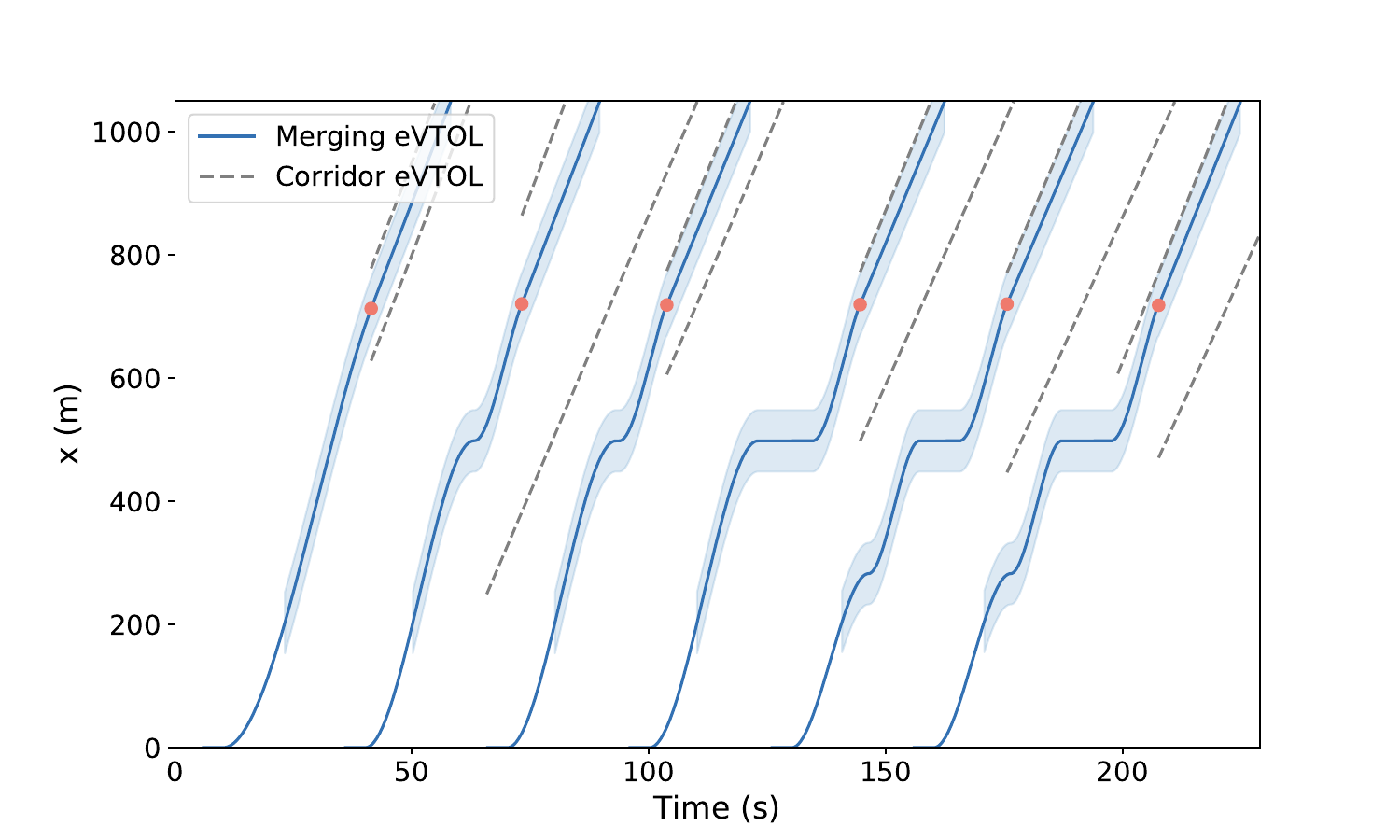}
        \caption{Light corridor traffic flow ($\rho=0.05$)}
    \end{subfigure}
    \hfill
    \begin{subfigure}{0.49\textwidth}
        \centering
        \includegraphics[width=1.15\textwidth]{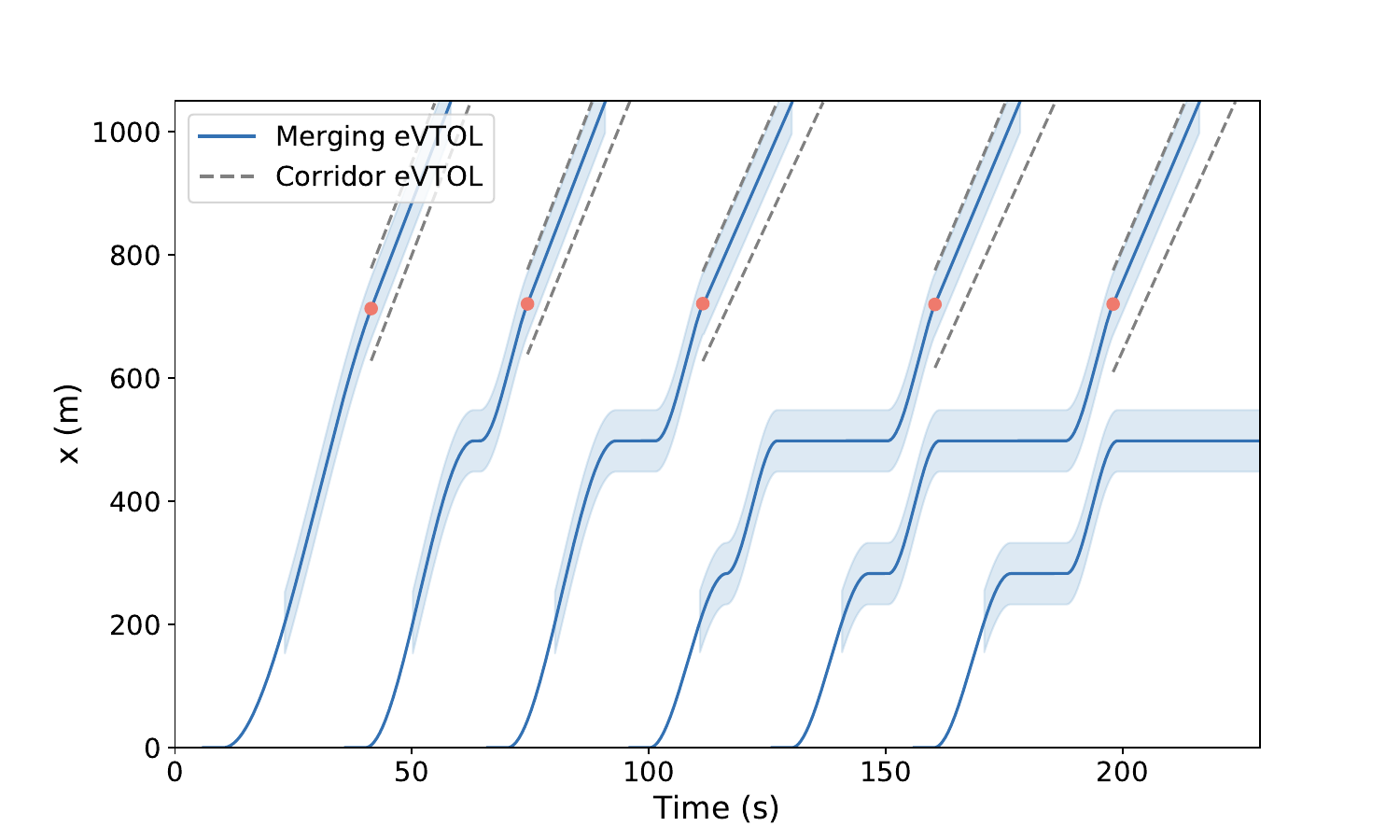} 
        \caption{Heavy corridor traffic flow ($\rho=0.15$)}
    \end{subfigure}

    \caption{Horizontal position trajectory of merging eVTOLs under ATFM strategy.}
    \label{fig:A3}
\end{figure}

\begin{figure}[h!]
    \centering
    \begin{subfigure}[t]{0.325\textwidth}
        \centering
        \includegraphics[width=0.95\textwidth]{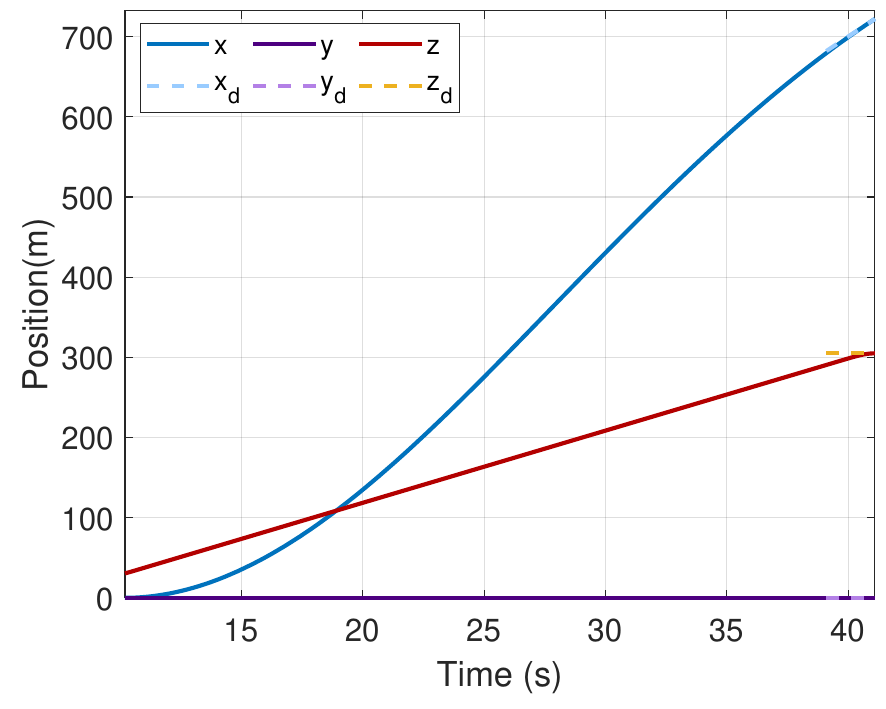}
        \caption{The position trajectory}
        \label{fig:position}
    \end{subfigure}
    \begin{subfigure}[t]{0.325\textwidth}
        \centering
        \includegraphics[width=0.95\textwidth]{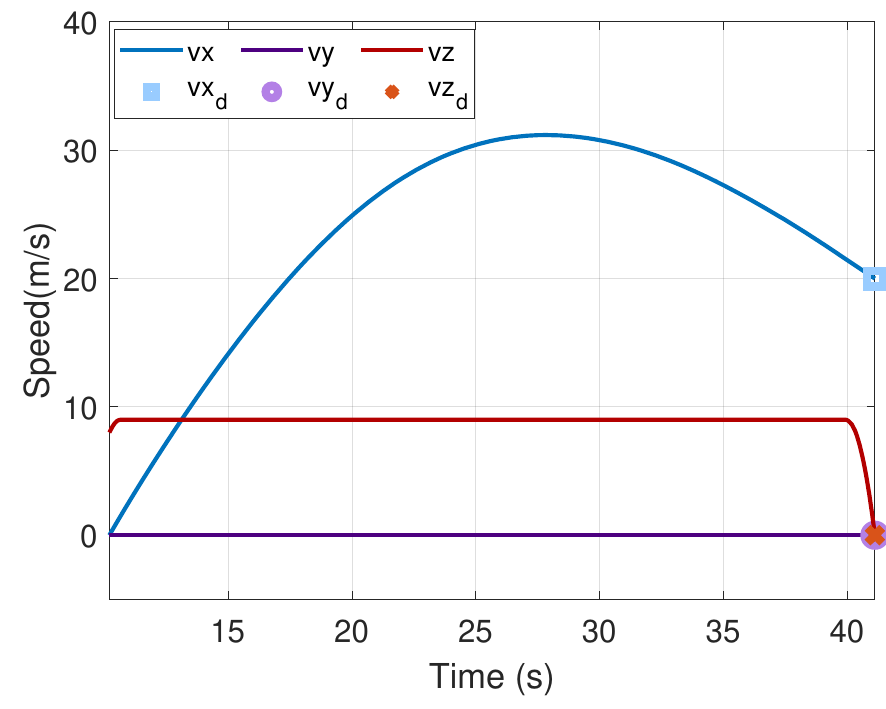}
        \caption{The speed trajectory}
        \label{fig:speed}
    \end{subfigure}
    \begin{subfigure}[t]{0.325\textwidth}
        \centering  \includegraphics[width=0.95\textwidth]{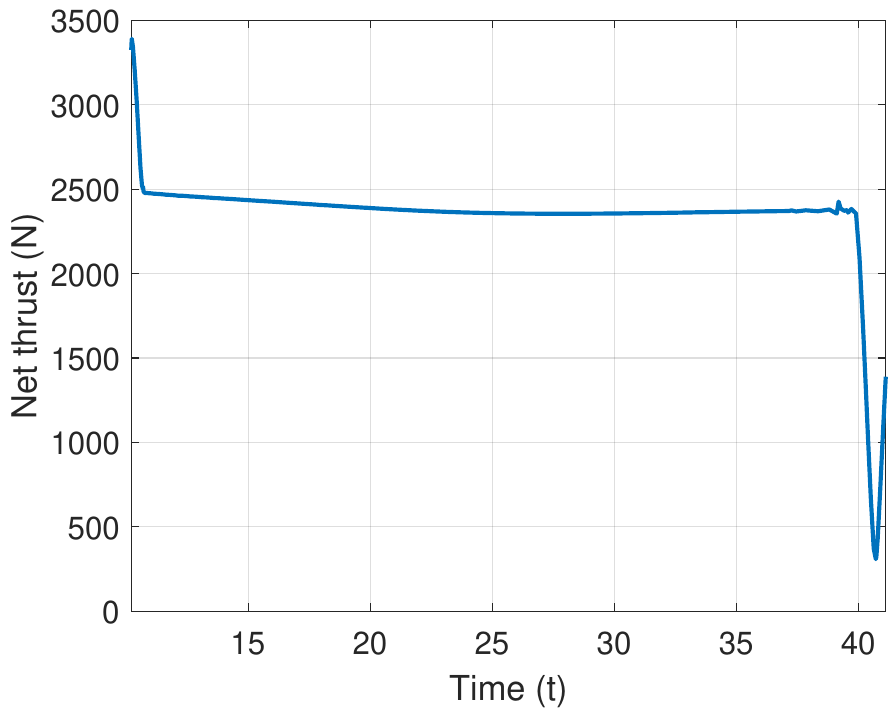}
        \caption{The net thrust input}
        \label{fig:F1}
    \end{subfigure}

    \caption{Simulation results of merging eVTOL M1 under the HCTMM strategy in light corridor traffic flow.}
    \label{fig:A2}
\end{figure}

\textit{(2) Travel efficiency improvement.} 
To validate the effectiveness of the tactical coordination of vertiports, we compare the proposed HCTMM strategy to a baseline take-off and merging strategy—autonomous take-off with a fixed merging point (ATFM). Selecting a fixed merging point is never a trivial task. If the merging point is too close to the observation zone, the climbing becomes steep, increasing energy consumption. If it is too close to the endpoint, there is insufficient longitudinal space for speed adjustment to align with the corridor flow. Based on these considerations, the merging point is designated at 720 m (approximately 68\% of the TM section length) to extend horizontal movement during the climb phase for energy efficiency, while retaining sufficient distance for speed adjustment and conflict resolution during merging. In the ATFM strategy, the trajectories comply with the path constraints specified in \cite{EasyAccess2024}. 
By analyzing both \hyperref[fig:A3]{Figure \ref*{fig:A3}} and the supplementary video, we observe that the majority of trajectories under the ATFM strategy include an airborne holding phase (sometimes even two), which significantly delays eVTOLs' merging completion times. In contrast, \hyperref[fig:A1]{Figure \ref*{fig:A1}} illustrates that the take-off and merging trajectories in the HCTMM strategy are significantly smoother. By managing take-off times and selecting dynamic merging points at the tactical level, the HCTMM strategy effectively mitigates potential conflicts. This prevents eVTOLs from airborne holding, which would otherwise increase energy consumption, disrupt subsequent flight missions, and reduce the capability for responding to incidents such as adverse weather conditions. 
\autoref{tab:05} presents the performance metrics for each merging eVTOL under both strategies. The HCTMM strategy exhibits shorter completion time and travel time, as compared to the ATFM strategy \footnote{Given that the HCTMM strategy employs variable merging points, the travel time, control cost and energy consumption — measured from the generation of demand to the leave from the TM section — serves as more representative metrics for evaluating the efficiency of take-off and merging strategies.}.
Moreover, the HCTMM strategy effectively manages take-off times, achieving the conversion of airborne holding time into controlled take-off delays, resulting in a significant reduction in control cost for most eVTOLs.

\hyperref[fig:A4]{Figure \ref*{fig:A4}} illustrates the performance benefits of the HCTMM strategy under different corridor traffic flow levels, compared to the ATFM strategy. 
To quantify the reduction in control cost for energy savings, we utilize a set of energy consumption models for different eVTOL operating states, as developed by \cite{borzemski2018information,ni2022energy,gong2023modeling}. As shown in \hyperref[fig:A4]{Figure \ref{fig:A4}}, the HCTMM strategy achieves substantial reductions in average travel time, control cost, and energy consumption, highlighting its superior performance. This advantage becomes increasingly pronounced as the corridor traffic volume increases. In particular,  under heavy traffic conditions ($ \rho = 0.15 $),  the ATFM strategy fails to complete the merging process within the total simulation duration $T_{\max}$, resulting in missing data. Its average travel time, control cost, and energy consumption would be significantly higher than those of the HCTMM strategy. These findings imply that in high-density, high-demand UAM environments, the HCTMM strategy will offer ongoing advantages in managing traffic flow.



\begin{table}[h!]
\centering
\begin{threeparttable}[b]
\caption{Performance comparison between HCTMM and ATFM strategies in Case 1. }
\begin{tabular}{c c cccc cccc}
    \toprule
    \multicolumn{2}{c}{} & \multicolumn{4}{c}{HCTMM strategy} & \multicolumn{4}{c}{ATFM strategy} \\
    \cmidrule(lr){3-6} \cmidrule(lr){7-10}
    \multicolumn{1}{c}{\parbox{0.9cm}{Traffic flow}} & \multicolumn{1}{c}{ID} & 
    \multicolumn{1}{c}{\parbox{1.3cm}{Take-off \\ time (s)}} & 
    \multicolumn{1}{c}{\parbox{1.6cm}{Completion \\ time (s)}} & 
    \multicolumn{1}{c}{\parbox{1.3cm}{Travel \\ time (s)}} & 
    \multicolumn{1}{c}{\parbox{1.3cm}{Control\\ cost (-)}} & 
    \multicolumn{1}{c}{\parbox{1.3cm}{Take-off \\ time (s)}} & 
    \multicolumn{1}{c}{\parbox{1.6cm}{Completion \\ time (s)}} & 
    \multicolumn{1}{c}{\parbox{1.3cm}{Travel \\ time (s)}} & 
    \multicolumn{1}{c}{\parbox{1.3cm}{Control\\ cost (-)}} \\
        \midrule
        \multirow{6}{*}{Light}& M1& 6.0&41.1&57.5&2609&6.0&41.3&58.0&2259\\
        & M2& 36.0&71.1&85.0&2499&36.0&73.2&89.9&2841\\
        & M3& 66.0&101.1&121.1&2779&66.0&103.8&121.4&2924\\
        & M4& 96.0&131.1&162.3&3294&96.0&144.6&162.5&3457\\
        & M5& 126.0&180.9&185.4&2940&126.0&175.6&193.9&3535\\
        & M6& 156.0&191.1&224.8&3407&156.0&207.6&225.0&3587\\
        \midrule
        \multirow{6}{*}{Heavy}& M1& 6.0&41.1&57.5&2609&6.0&41.3&58.0&2259\\
        & M2& 36.0&71.1&90.6&2751&36.0&74.4&91.1&2898\\
        & M3& 66.0&101.1&130.1&3194&66.0&111.4&130.2&3417\\
        & M4& 96.0&131.1&178.1&4050&96.0&160.4&178.4&4264\\
        & M5& 127.6&193.9&210.1&4028&126.0&198&216.3&4644\\
        & M6& 156.0&216.8&239.7&4093&156.0&-&-&-\\
        \bottomrule
    \end{tabular}
    \label{tab:05}
  \end{threeparttable}
\end{table}

\begin{figure}[h!]
    \centering
    \begin{subfigure}{0.49\textwidth}
        \centering \includegraphics[width=1.05\textwidth]{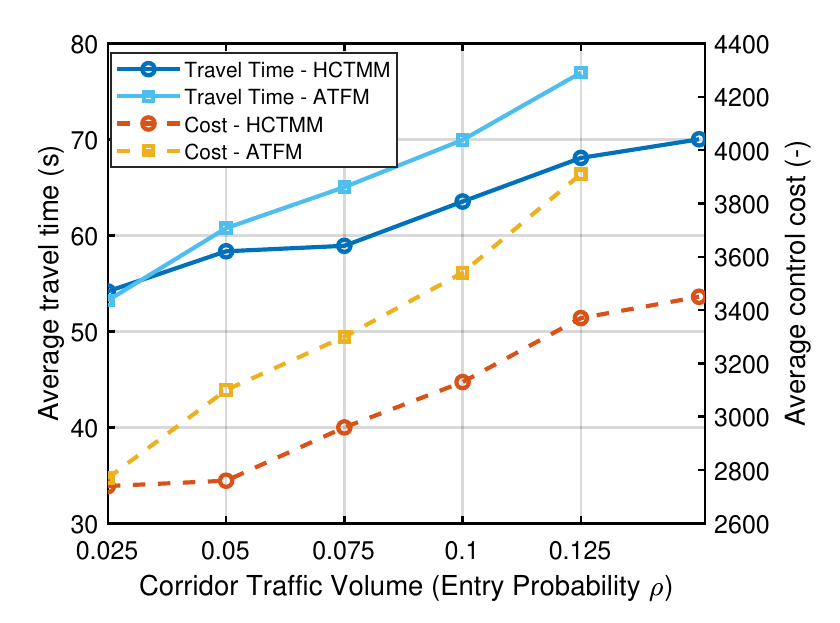}
        \caption{ Control cost}
    \end{subfigure}
    \hfill
    \begin{subfigure}{0.49\textwidth}
        \centering
        \vspace{-0.75em} 
        \includegraphics[width=1.05\textwidth]{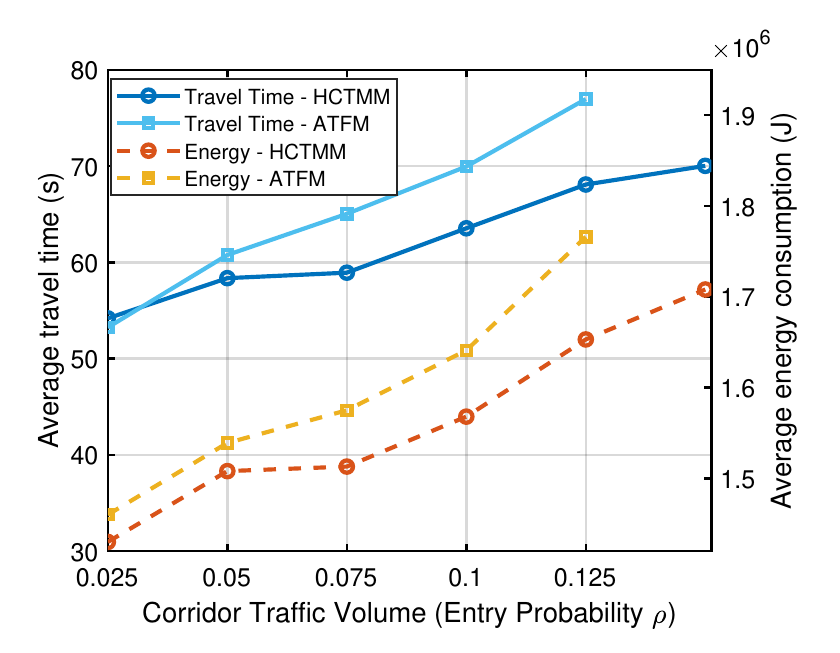} 
        \caption{Energy consumption}
    \end{subfigure}
    \caption{Performance comparison of both strategies in cases of different levels of corridor traffic flow.}
    \label{fig:A4}
\end{figure}

\textit{(3) Computational efficiency.}
Computational efficiency is essential for real-time air traffic management. At the tactical level, we propose a heuristic algorithm for selecting dynamic merging points. To the best of our knowledge, this is the first work that addresses dynamic merging point selection in the context of UAM corridor merging control. In this part, we compare the proposed HCTMM strategy with two take-off and merging strategies based on dynamic merging points: greedy-like exhaustive search (GES) and rule-based exhaustive search (RES).
\begin{itemize}
    \item The GES strategy exhaustively searches all candidate merging pairs for each merging eVTOL and plans the corresponding take-off and merging trajectories by following a procedure similar to that at the operational level (see \autoref{sec: Operational deconfliction control}). However, due to the absence of feasibility and safety guarantees in this strategy, one cannot ensure that the OCP associated with every candidate merging pair is feasible. To address this issue, part of the safety constraints in  \autoref{prob:4} is shifted to a post-solution safety filter. Among the candidate merging pairs that pass the safety filter, the GES strategy selects the one with the lowest total cost as the per-target merging pair. The total cost is computed as a convex combination of the total travel time and control cost.
    \item In the RES strategy, all candidate merging pairs are first checked for safety and validity at the requested take-off time, following the procedure described in \autoref{sec: Tactical conflict management}. For each safe and valid pair, the optimal take-off and merging trajectory is planned using the same method as at the operational level (see \autoref{sec: Operational deconfliction control}) for every merging eVTOL. Among the resulting trajectories, the merging pair with the lowest total cost is selected as optimal, and its corresponding merging point is designated as the target dynamic merging point. 
\end{itemize}

\autoref{tab:06} shows the per-target merging pair selection and computation time of the HCTMM, GES, and RES strategies at light and heavy traffic scenarios. All three strategies yield the same merging pairs under the light traffic condition. Under the heavy traffic condition, the merging pairs by the HCTMM and RES are slightly different from those by the GES. Some of the take-off times determined by the GES are later than those by the HCTMM and RES. In addition, the GES selects some per-target merging pairs later than the other two strategies.
As discussed in \autoref{sec:4.1}, the distance between candidate merging pairs changes dynamically over time. Due to the lack of a comprehensive assessment of the safety and validity of each candidate merging pair throughout the entire TM section, the GES strategy exhibits a short-sighted decision-making process. Specifically, it may generate trajectories with potential collision risks for merging pairs that appear unsafe in the early phase but become safe and valid later. The GES strategy may also mistakenly select pairs that seem safe and valid at the beginning but later violate the safety principle. Consequently, such candidate merging pairs are typically rejected by the safety filter or may even lead to delayed take-off, as indicated by \autoref{tab:06}. 
Moreover, the HCTMM produces the same results as the RES strategy in terms of merging pair selection under both traffic conditions.
However, the computation time of the HCTMM is much less than that of the RES and GES. Nevertheless, the average computation time per eVTOL under RES and GES strategies exceeds the reserved planning time threshold of $T = 6$ seconds, making them intractable for real-time implementation.
These findings indicate that the HCTMM can identify optimal merging points whilst satisfying real-time operational requirements.

    

\begin{table}[h!]
\centering
\begin{threeparttable}[b]
\caption{Performance comparison between HCTMM, GES and RES strategies in Case 1.}
\renewcommand{\arraystretch}{1.1}
\begin{tabular}{>{\centering\arraybackslash}m{4.8cm} c c}
    \toprule
    Index& Light traffic flow & Heavy traffic flow \\
    \midrule
    \makecell[c]{Take-off time by HCTMM 
    \\ and RES (s)}
    & \makecell{6.0, 36.0, 66.0\\96.0, 126.0, 156.0} 
    & \makecell{6.0, 36.0, 66.0\\96.0, 127.6, 156.0} \\

    \makecell[c]{Take-off time by GES (s)}
    & \makecell{6.0, 36.0, 66.0\\96.0, 126.0, 156.0} 
    & \makecell{6.0, 36.0, 66.0\\96.2, 132.8, 188.2} \\

    \makecell[c]{Per-target merging pair selected \\ by HCTMM and RES}
    & \makecell{(h1,h2)(h5,h6)(h9,h10)\\(h16,h17)(h19,h20)(h26,h27)} 
    & \makecell{(h1,h2)(h7,h8)(h14,h15)\\(h26,h27)(h34,h35)(h40,h41)} \\

    \makecell[c]{Per-target merging pair selected \\by GES}
    & \makecell{(h1,h2)(h5,h6)(h9,h10)\\(h16,h17)(h19,h20)(h26,h27)} 
    & \makecell{(h1,h2)(h7,h8)(h14,h15)\\(h26,h27)(h37,h38)(h47,h48)} \\

    \makecell[c]{HCTMM comp. (s)} & 4.42 & 5.83 \\
    \makecell[c]{RES comp. (s)}   & 9.67 & 10.84 \\
    \makecell[c]{GES comp. (s)}   & 15.94 & 68.80 \\
    \bottomrule
\end{tabular}
\label{tab:06}
\end{threeparttable}
\end{table}

\subsection{Case 2: Coordination between multiple corridor-vertiport pairs}

In this case, we simulate the merging scenario of denser multiple corridor-vertiport pairs. The corridor traffic flow is generated by the same routine as in Case 1. At the initial time, the horizontal positions of the eVTOLs in the observation zone are given by $[600,500,380,300,200,150,-50, -250, -300, -500, -600]$ m. 
During the simulation, the vertiport will generate take-off requests for 19 merging eVTOLs and 61 corridor eVTOLs will pass through the corridor segment. 

How to ensure coordination between take-off and merging trajectories in densely distributed vertiports will be a key concern for future airspace managers. As shown in \hyperref[fig:A6]{Figure \ref{fig:A6}} and the supplementary video\footnote{A supplementary video is available at: \url{https://youtube.com/shorts/kW26AHOI5ig}}, all merging eVTOLs smoothly and successfully merge into the corridor. Moreover, merging eVTOLs with close take-off times and located at nearby vertiports can maintain safe separation throughout the entire process. This implies that the HCTMM strategy can be effectively extended to scenarios involving multiple corridor-vertiport pairs. By integrating the take-off airspace design and the proposed HCTMM strategy, the spatial and temporal separation of the eVTOLs is achieved, ensuring strict safety guarantees.
\autoref{fig:11} demonstrates the effectiveness of the proposed take-off airspace design in reducing the computational time under different corridor traffic levels. 
Without the TM section design, the average computation time per eVTOL increases with the increase in corridor traffic volume, while such performance metric is not significantly influenced by the increase in corridor traffic volume if we conduct the TM section design.
For all corridor flow levels, the TM section design keeps the average computation time below the reserved planning time threshold of $T=6$ seconds.
This is one of the merits of the TM section design, which reduces the number of variables and obstacle avoidance constraints, effectively alleviating the computation time to between one-half and one-third of that under the free take-off airspace. This enhances the real-time feasibility of the HCTMM strategy for coordinated take-off merging management across multiple corridor-vertiport pairs.

\begin{figure}[htbp]
    \centering
    \begin{minipage}{0.49\textwidth}
        \centering
        \includegraphics[trim=0 170 0 100, clip, width=1.05\textwidth]{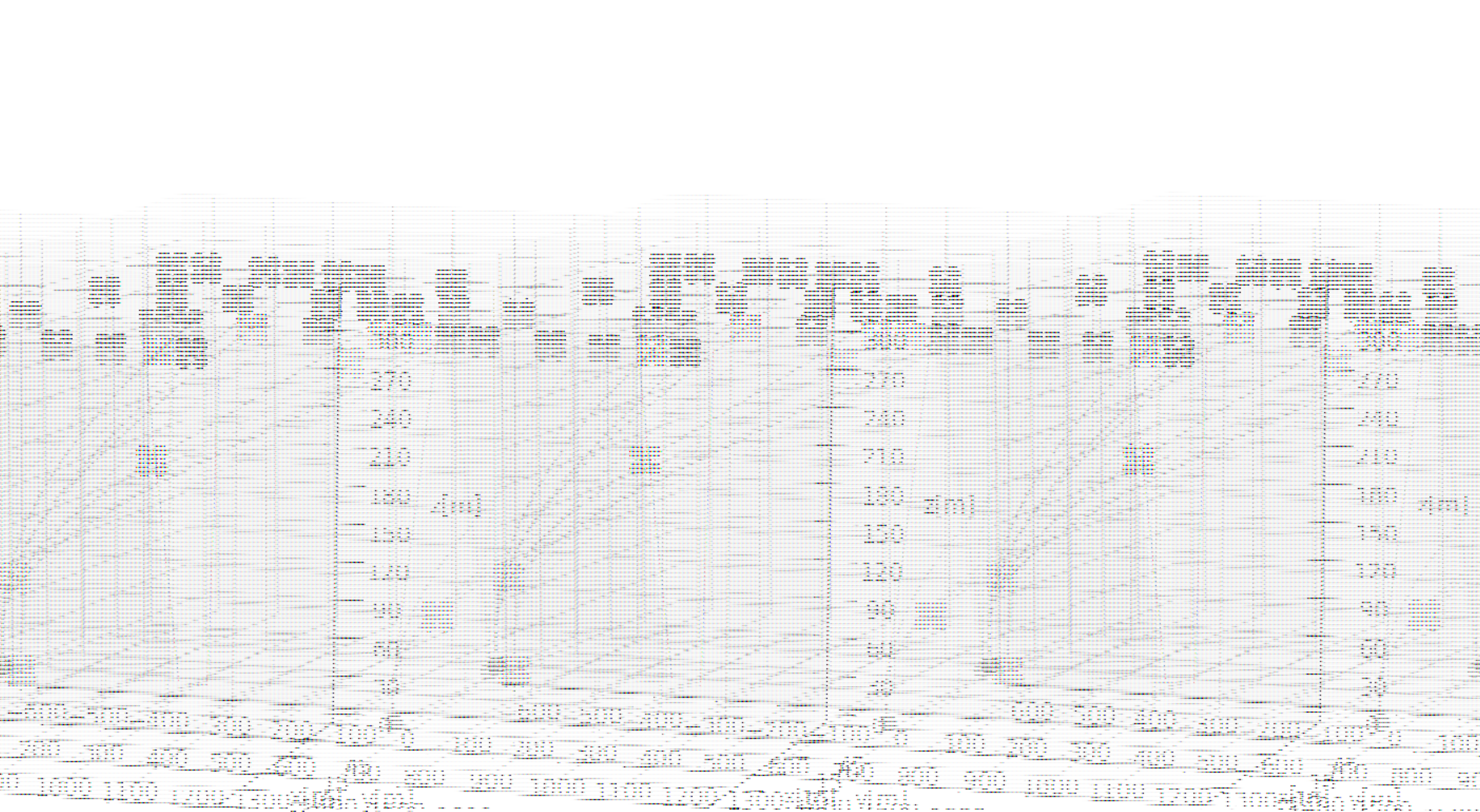} 
        \caption*{(a) $t=66s$}
    \end{minipage}
    \hfill
    \begin{minipage}{0.49\textwidth}
        \centering
        \includegraphics[trim=0 170 0 100, clip,width=1.05\textwidth]{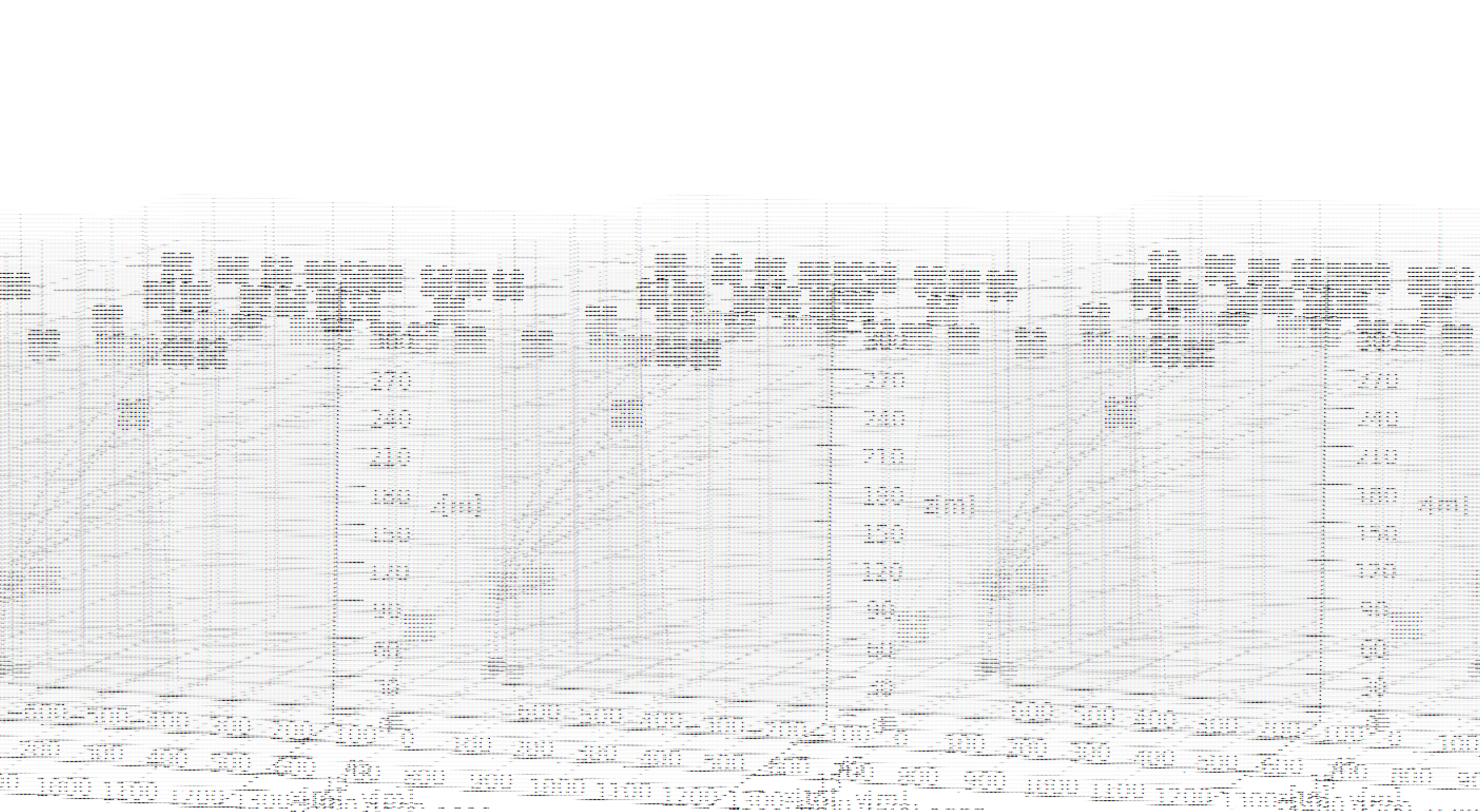} 
        \caption*{(b) $t=105s$}
    \end{minipage}

    \begin{minipage}{0.49\textwidth}
        \centering
        \includegraphics[trim=0 170 0 100, clip,width=1.05\textwidth]{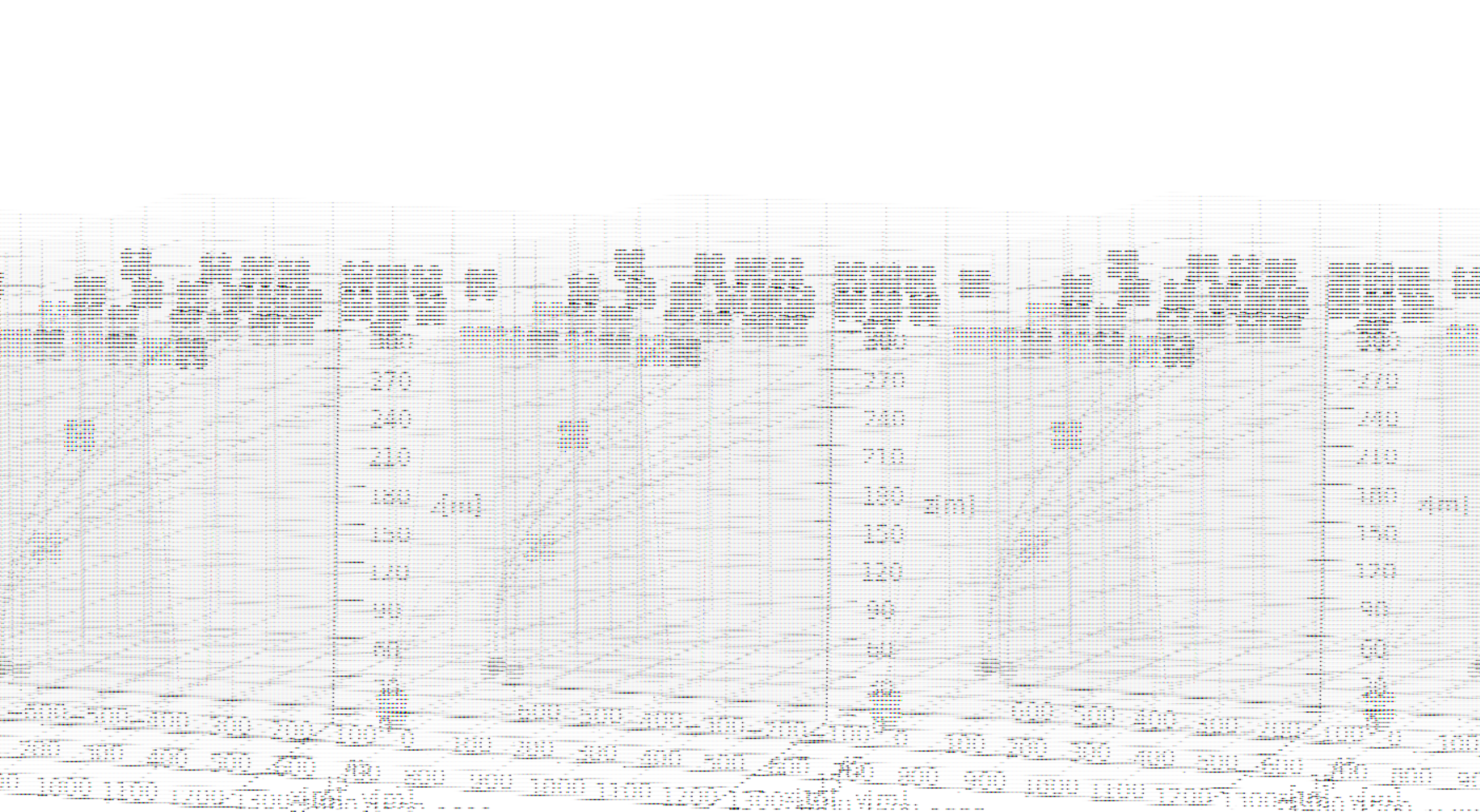} 
        \caption*{(c) $t=138s$}
    \end{minipage}
    \hfill
    \begin{minipage}{0.49\textwidth}
        \centering
        \includegraphics[trim=0 170 0 100, clip,width=1.05\textwidth]{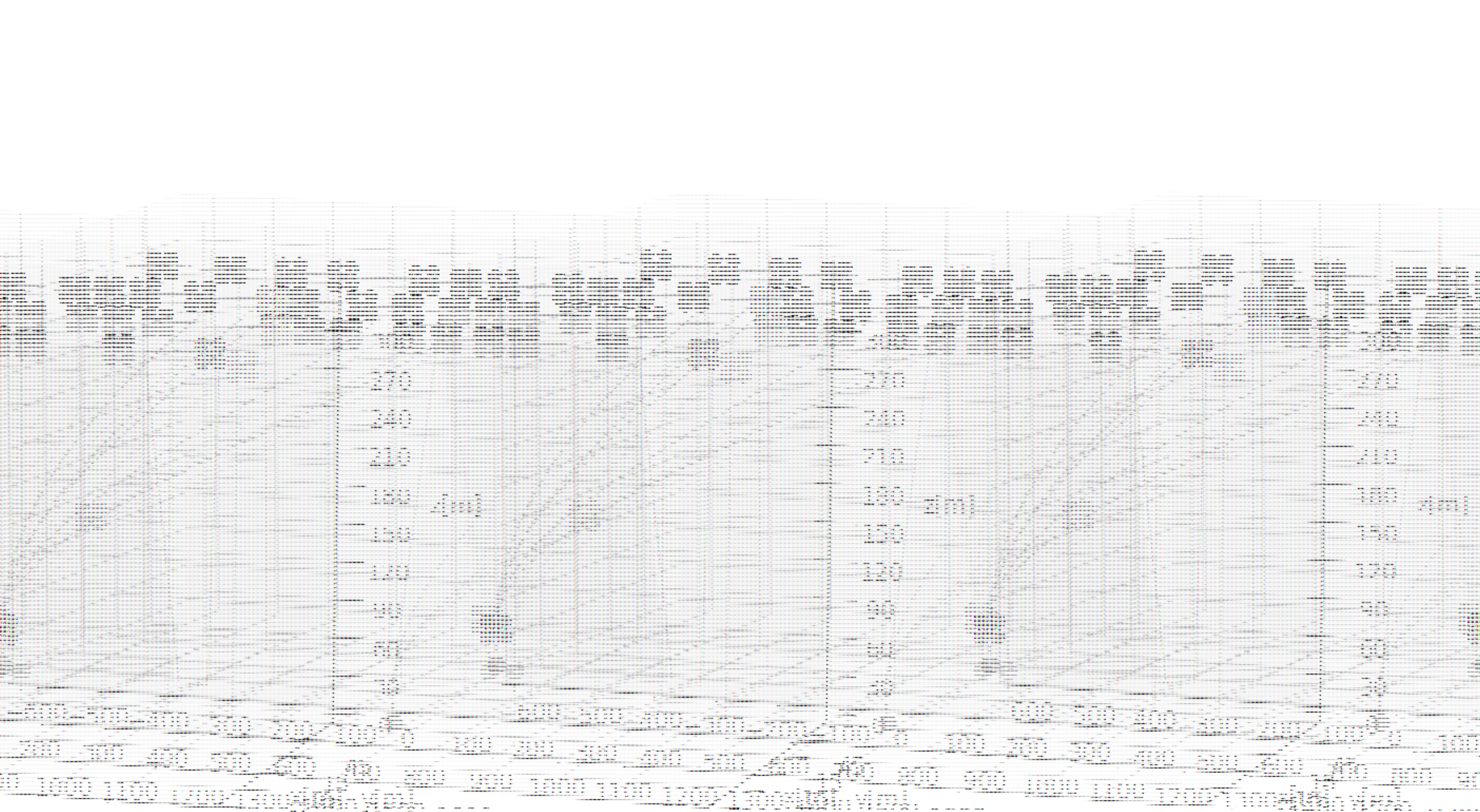} 
        \caption*{(d) $t=155s$}
    \end{minipage}

    \caption{Key frame from the simulation video for Case 2 ($\rho = 0.10$), where the width of a single eVTOL symbol approximates the safe distance.}
    \label{fig:A6}
\end{figure}

\begin{figure}[h]
    \centering
    \includegraphics[width=0.46\textwidth]{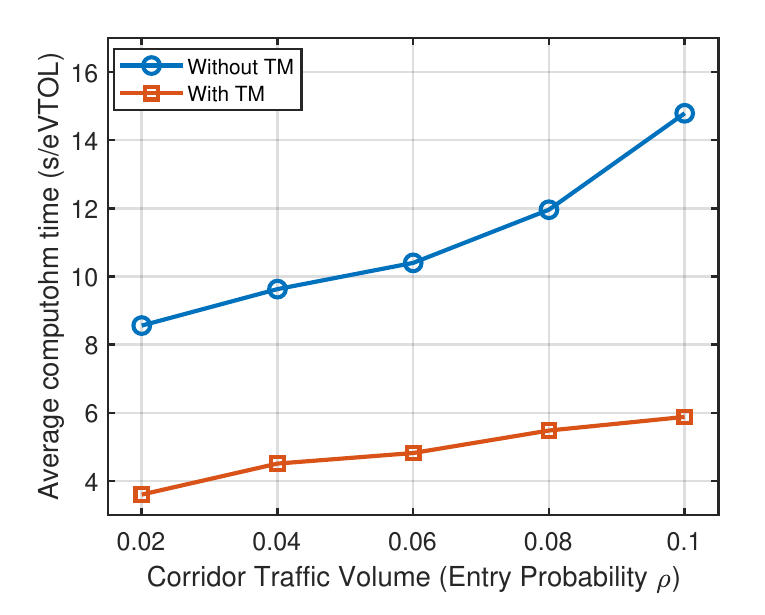}
    \caption{Computational efficiency analysis of HCTMM for take-off airspace design in Case 2.}
    \label{fig:11}
\end{figure}

\section{Conclusion}
\label{sec:Conclusion}
This paper presents an integrated strategy for managing take-off and merging operations in Urban Air Mobility (UAM) corridors. The proposed structured design for take-off airspace can significantly reduce trajectory planning complexity and simplify obstacle avoidance maneuvers. 
The hierarchical coordinated take-off and merging management (HCTMM) strategy integrates tactical scheduling and operational trajectory optimization. At the tactical level, the scheduling algorithm coordinates aircraft take-off times while assigning dynamic merging points to mitigate potential conflicts and minimize airborne holding. At the operational level, the trajectory optimization framework minimizes control cost and flight time subject to safety constraints.
Simulation results validate the effectiveness of our approach under various traffic conditions. The proposed strategy can reduce flight time, control cost, and enhance computational efficiency. This work establishes a framework to support seamless and conflict-free merging of electric vertical take-off and landing (eVTOL) aircraft into urban airspace corridors. 
Future efforts will be devoted to more complex scenarios, such as more complex corridor structures (e.g., multiple lanes). Additionally, further research will explore the collaborative control of corridor eVTOLs to achieve a trade-off between the take-off delay of merging eVTOLs and the additional delays introduced by coordinated merging within the corridor.
\begin{appendices}

\section{Key notations}
  \hyperref[tab:01]{Table \ref*{tab:01}} collects
the key notations throughout this paper.

\begin{table}
    \centering
    \caption{Key notations}
    \begin{tabular}{p{0.7in}p{5.5in}}
        \toprule
        Notation & Description \\
        \midrule
        $i $ & Index of merging eVTOL from the set $\mathcal{I}$, comprising eVTOLs taking-off in the considered region\\
        $O_i$ & The vertiport position where eVTOL $i$ takes off, $O_i=(O_i^x,O_i^y,O_i^{z})$\\
        $D_i$ & The transition point position associated with vertiport $O_i$, $D_i=(O_i^x,O_i^y,O_i^{z}+L_d)$\\
        $V_{TOSS}$ & The take-off safety speed, with its magnitude denoted by $v_{TOSS}$ \\
        $\phi_i$ & Rotational angle from the $X_i^*Y_i^*Z_i^*$ frame to the $X_i'Y_i'Z_i'$ frame\\ 
        $H(t)$ & The set of eVTOLs within the observation zone of the corridor at time $t$\\
        $H_{e}(t)$ & The extended set of eVTOLs within the observation zone of the corridor at time $t$\\
        $N_{H}(t)$ & The number of corridor eVTOL set $H(t)$ at time $t$ \\
        $N_{H_e}(t)$ & The number of extended corridor eVTOL set $H_{e}(t)$ at time $t$ \\
        $p_{h_k}(t)$ & The position of corridor eVTOL $h_k$ at time $t$, $p_{h_k}(t)=\left(x'_{h_k}(t),z'_{h_k}(t)\right)$\\
        $V_{h_k}$ & Constant speed vector of corridor eVTOL $h_k$ in the observation zone and TM section, $V_{h_k}=(v_{h_k}^{x'},v_{h_k}^{z'})$\\
        $L_{o}$ & The length of observation zone\\
        $L_{c}$ & The height of corridor\\
        $L_{m}$ & The width of the TM section\\
        $L_{s}$ & The minimum safe distance between the adjacent corridor eVTOLs\\
        $s_f$ & The safe distance between two eVTOLs\\
        $t_{i}^{q}$ & The requested take-off time of the merging eVTOL $i$\\
        $t_{i}^{0}$ & The planned take-off time of the merging eVTOL $i$\\  
        $\mathcal{J}$& The set of eVTOLs in the same TM section as eVTOL $i$ that have not merged at time $t_i^0$\\
        $C_j$ & The trajectory of eVTOL $j$ with potential collision risk involving eVTOL $i$, where $j \in \mathcal{J}$\\
        $p^{ter}$ & The position of the endpoint of TM section for vertiport $O_i$, $p^{ter}=(L_{m},(L_c-L_d) \sec\phi_i)$\\
        $T_{k}^{ter}$ & The time for eVTOL $h_k$ to reach the endpoint $(ter)$ of TM section \\
        $\lambda$ & Time-to-energy value conversion coefficient\\
        $m_i$ & The mass of eVTOL $i$ \\
        \midrule
        $P_{i}^{d}(t)$ & The target dynamic position of the merging point of the merging eVTOL $i$ at time $t$, $P_{i}^{d} (t)=\left({x'}^d_i(t), (L_c-L_d) \sec\phi_i \right)$ \\
        $ \bar{t}_{ki}^{a}$ & The shortest time for eVTOL $i$ to reach the farthest merging point $p^a$ for pair $(h_k,\ h_{k+1})$\\
        $\bar{t}_{i}^{0}$ & The actual take-off time of the merging eVTOL $i$\\
         $\bar{t}_{i}^{v}$ & The actual take-off climb initiation time of merging eVTOL $i$\\
        $t_{i}^{f}$ & The merging completion time of the merging eVTOL $i$\\
        $x'_i(t)$, $z'_i(t)$ & The horizontal position, vertical position of the merging eVTOL $i$ at time $t$\\
        $ v_{i}^{x'}(t)$, $ v_{i}^{z'}(t)$ & The horizontal speed, vertical speed of the merging eVTOL $i$ at time $t$ \\
        $F_{i}(t)$ & The net thrust of eVTOL $i$ at time $t$\\
        $\theta_{i}(t)$ & The tip-path-plane pitch angle of eVTOL $i$ at time $t$\\
        \bottomrule
    \end{tabular}
    \label{tab:01}
\end{table}

\section{Simplification of Dynamical Systems}
\label{sec: Appendix}
As the take-off climb and merging trajectory optimization problem in free-flight airspace, nonlinear dynamics with 4-dimensional control variables, i.e., the net thrust $ F_i $ and Euler angles $ (\alpha_i, \beta_i, \gamma_i) $, and the 6-dimensional state variables, i.e., the position $(x_i, y_i, z_i) $, and the speed $(v_{i}^x, v_{i}^y, v_{i}^z) $ are considered \citep{quan2017introduction}. The motion dynamics of eVTOL in free-flight airspace are as follows:
\begin{subequations}
\label{eq:b}
\begin{align}
\dot{x_i}&=v_{i}^x \label{eq:b1}\\
\dot{y_i}&=v_{i}^y \label{eq:b2}\\
\dot{z_i}&=v_{i}^z \label{eq:b3}\\
\dot{v}_{i}^x&=\frac{F_i}{m_i}(\cos\alpha_i \sin\beta_i \cos\gamma_i+ \sin\alpha_i \sin \gamma_i)\label{eq:b4}\\
\dot{v}_{i}^y&=\frac{F_i}{m_i}(\cos\alpha_i \sin\beta_i \sin\gamma_i - \sin\alpha_i \cos \gamma_i)\label{eq:b5}\\
\dot{v}_{i}^z&=\frac{F_i}{m_i}\cos\beta_i \cos\alpha_i-g\label{eq:b6}
\end{align}
\end{subequations}
where $ g $ denotes the gravitational acceleration constant and $m_i$ represents the mass of eVTOL $i$. \eqref{eq:b4}-\eqref{eq:b6} can be simplified as
\begin{equation}
\label{eq:g}
    \dot{\bm{v}}=\frac{F_i}{m_i} R_{1,i} e_3-g e_3
\end{equation}
where $\bm{v}=[v_i^x\;v_i^y\;v_i^z]^T$, $ e_3 = [0\;0\;1]^T$. The rotation matrices $ R_{1,i} $ is defined as follows:
\begin{equation}
\label{eq:e}
    R_{1,i}=\begin{bmatrix}
 \cos\beta_i \cos\gamma_i& \sin\alpha_i \sin\beta_i \cos\gamma_i  - \cos\alpha_i \sin \gamma_i & \cos\alpha_i \sin\beta_i \cos\gamma_i + \sin\alpha_i \sin \gamma_i \\
 \cos\beta_i \sin\gamma_i&  \sin\alpha_i \sin\beta_i \sin\gamma_i + \cos\alpha_i \cos \gamma_i & \cos\alpha_i \sin\beta_i \sin\gamma_i - \sin\alpha_i \cos \gamma_i\\
 -\sin\beta_i& \sin\alpha_i \cos\beta_i& \cos\beta_i \cos\alpha_i
\end{bmatrix}
\end{equation}
Building on the structured take-off airspace, we constrain the eVTOL's trajectory to the $X_i'D_iZ_i'$ plane, which is defined by the $X_i'$-axis, $Z_i'$-axis, and the origin $D_i$ of the $X_i'Y_i'Z_i'$ coordinate system. The $X_i'Y_i'Z_i'$ coordinate system is constructed by first translating the inertial frame $X_iY_iZ_i$ such that its origin shifts from the vertiport $O_i$ to the corresponding transition point $D_i$ for each eVTOL $i$, resulting in an intermediate frame $X_i^*Y_i^*Z_i^*$. A subsequent clockwise rotation by an angle $\phi_i$ about the $ X_i^*$-axis is then applied to this intermediate frame. The coordinate transformation procedure is illustrated in \autoref{fig:12}. The associated rotation matrix $R_{2,i}$ is given by:
\begin{equation}
\label{eq:f}
    R_{2,i}=\begin{bmatrix}
 1&0&0\\
 0&\cos\phi_i&-\sin\phi_i\\
 0& \sin\phi_i& \cos\phi_i
\end{bmatrix}
\end{equation}
\begin{figure}[h]
    \centering
    \includegraphics[width=0.55\textwidth]{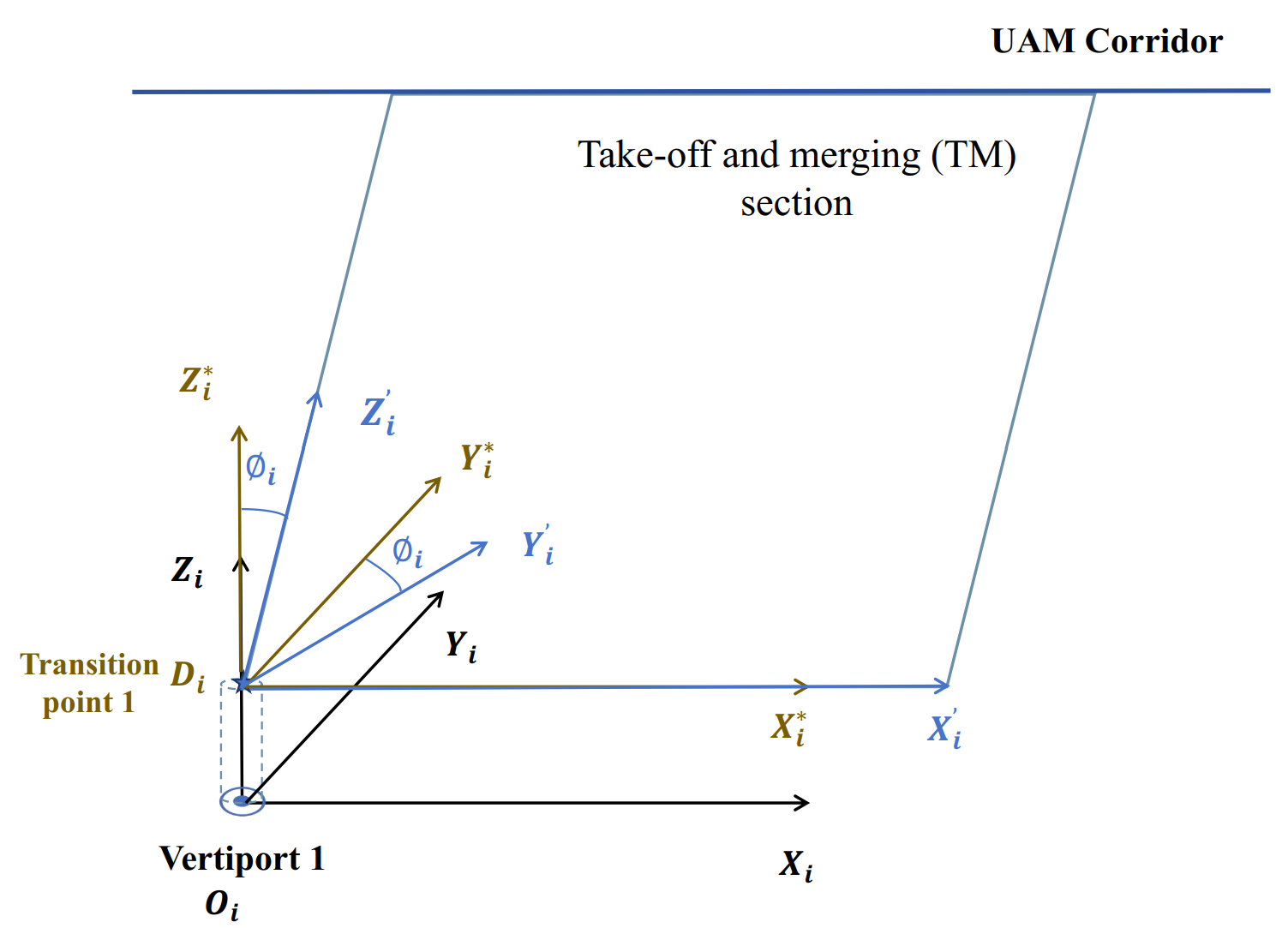}
    \caption{Two-Stage Coordinate Transformation Procedure.}
    \label{fig:12}
\end{figure}
As the translation merely shifts the coordinate origin without altering the dynamics, only the rotation transformation needs to be applied to \eqref{eq:g}. Therefore, \eqref{eq:g} is transformed into the following form:
\begin{equation}
\label{eq:h}
    \dot{\bm{v}'}=\frac{F_i}{m_i} R_{2,i}R_{1,i} e_3-g R_{2,i}e_3
\end{equation}
Expanding the terms in Equation \eqref{eq:h}, we obtain:
\begin{subequations}
\label{eq:23333}
\begin{align}
    \begin{bmatrix}
    \dot{v}_i^{x'} \\
    \dot{v}_i^{y'} \\
    \dot{v}_i^{z'}
    \end{bmatrix}=&\frac{F_i}{m_i}
    \begin{bmatrix}
 1&0&0\\
 0&cos\phi_i&-sin\phi_i\\
 0& sin\phi_i& cos\phi_i
\end{bmatrix}
\begin{bmatrix}
\cos\alpha_i \sin\beta_i \cos\gamma_i + \sin\alpha_i \sin \gamma_i\\
\cos\alpha_i \sin\beta_i \sin\gamma_i - \sin\alpha_i \cos \gamma_i\\
\cos\beta_i \cos\alpha_i
\end{bmatrix}-g \begin{bmatrix}
 1&0&0\\
 0&\cos\phi_i&-\sin\phi_i\\
 0& \sin\phi_i& \cos\phi_i
\end{bmatrix}
\begin{bmatrix}
0\\
0\\
1
\end{bmatrix}\\
=&\frac{F_i}{m_i}
    \begin{bmatrix}
\cos\alpha_i \sin\beta_i \cos\gamma_i + \sin\alpha_i \sin \gamma_i\\
\cos\phi_i(\cos\alpha_i \sin\beta_i \sin\gamma_i - \sin\alpha_i \cos \gamma_i)-\sin\phi_i \cos\beta_i \cos\alpha_i\\
\sin\phi_i(\cos\alpha_i \sin\beta_i \sin\gamma_i - \sin\alpha_i \cos \gamma_i)+\cos\phi_i \cos\beta_i \cos\alpha_i
\end{bmatrix}-g
\begin{bmatrix}
0\\
-sin\phi_i\\
cos\phi_i
\end{bmatrix}
\end{align}
\end{subequations}
Given that the eVTOL's motion is restricted to the $X_iD_iZ_i'$ plane, the lateral acceleration component $\dot{v}_i^{y'}$ is identically zero. Thus, we obtain:
\begin{equation}
\label{eq:A32}
    \frac{F_i}{m_i}[\cos\phi_i(\cos\alpha_i sin\beta_i \sin\gamma_i - \sin\alpha_i \cos \gamma_i)-\sin\phi_i \cos\beta_i \cos\alpha_i]=-g\sin\phi_i
\end{equation}
Then, let us define the vector
\begin{equation}
\label{eq:a1}
    a=R_{2,i}R_{1,i} e_3=\begin{bmatrix}
\cos\alpha_i \sin\beta_i \cos\gamma_i + \sin\alpha_i \sin \gamma_i\\
\cos\phi_i(\cos\alpha_i \sin\beta_i \sin\gamma_i - \sin\alpha_i \cos \gamma_i)-\sin\phi_i \cos\beta_i \cos\alpha_i\\
\sin\phi_i(\cos\alpha_i \sin\beta_i \sin\gamma_i - \sin\alpha_i \cos \gamma_i)+\cos\phi_i cos\beta_i \cos\alpha_i
\end{bmatrix}
\end{equation}
Since $a$ is obtained by applying a sequence of rotation matrices to a unit vector, its norm remains 1. Combine \eqref{eq:A32}, we have:
\begin{subequations}
\begin{align}
&(\cos\alpha_i \sin\beta_i \cos\gamma_i + \sin\alpha_i \sin\gamma_i)^2+ \left[\cos\phi_i(\cos\alpha_i \sin\beta_i \sin\gamma_i - \sin\alpha_i \cos\gamma_i) 
- \sin\phi_i \cos\beta_i \cos\alpha_i \right]^2 \notag \\
&\quad + \left[\sin\phi_i(\cos\alpha_i \sin\beta_i \sin\gamma_i - \sin\alpha_i \cos\gamma_i) 
+\cos\phi_i \cos\beta_i \cos\alpha_i \right]^2 = 1 \\
&(\cos\alpha_i \sin\beta_i \cos\gamma_i + \sin\alpha_i \sin\gamma_i)^2 + \left[\sin\phi_i(\cos\alpha_i \sin\beta_i \sin\gamma_i - \sin\alpha_i \cos\gamma_i) 
+ \cos\phi_i \cos\beta_i \cos\alpha_i \right]^2 \notag \\
&\quad = 1 - \left(\frac{m_i g \sin\phi_i}{F_i}\right)^2
\end{align}
\end{subequations}
Further, since 
\begin{equation}
\begin{aligned}
&\frac{F_i^2}{{F_i}^2-(m_i g \sin\phi_i)^2}
\left[\sin\phi_i(\cos\alpha_i \sin\beta_i \sin\gamma_i - \sin\alpha_i \cos\gamma_i)+ \cos\phi_i \cos\beta_i \cos\alpha_i\right]^2\\
&+\frac{F_i^2}{{F_i}^2-(m_i g \sin\phi_i)^2}
\left(\cos\alpha_i \sin\beta_i \cos\gamma_i + \sin\alpha_i \sin\gamma_i\right)^2  = 1
\end{aligned}
\end{equation}
We can define the tip-path-plane pitch angle $\theta_i$ to represent the angle between the thrust vector and the $ Z'_i$-axis within the $X'_iD_iZ'_i$ plane. And we have
\begin{subequations}
\begin{align}
&\tan\theta_i=\frac{\cos\alpha_i \sin\beta_i \cos\gamma_i + \sin\alpha_i \sin\gamma_i}{\sin\phi_i(\cos\alpha_i \sin\beta_i \sin\gamma_i- \sin\alpha_i \cos\gamma_i)+\cos\phi_i \cos\beta_i \cos\alpha_i}\\
&\sqrt{1 - \left(\frac{m_i g \sin\phi_i}{F_i}\right)^2}\cos\theta_i=
\sin\phi_i(\cos\alpha_i \sin\beta_i \sin\gamma_i - \sin\alpha_i \cos\gamma_i)+ \cos\phi_i \cos\beta_i \cos\alpha_i\label{eq:A35b}\\
&\sqrt{1 - \left(\frac{m_i g \sin\phi_i}{F_i}\right)^2}\sin\theta_i =
\cos\alpha_i \sin\beta_i \cos\gamma_i + \sin\alpha_i \sin\gamma_i\label{eq:A35c}
\end{align}
\end{subequations}
 By \eqref{eq:23333}-\eqref{eq:A32} and \eqref{eq:A35b}-\eqref{eq:A35c}, we can simplify the dynamics of eVTOL $i$ as follows:
\begin{subequations}
\begin{gather}
\dot{x'_i}(t)=v_{i}^{x'} (t)\\
\dot{z'_i}(t)=v_{i}^{z'} (t)\\
\dot{v}_{i}^{x'}(t)=\frac{F_i(t)}{m_i}\sqrt{1 - \left(\frac{m_i g \sin\phi_i}{F_i}\right)^2}\sin(\theta_i(t))\\
\dot{v}_{i}^{z'}(t)=\frac{F_i(t)}{m_i}\sqrt{1 - \left(\frac{m_i g \sin\phi_i}{F_i}\right)^2}\cos(\theta_i(t))-g \cos\phi_i
\end{gather}
\end{subequations}

\section{GPOPS-\uppercase\expandafter{\romannumeral2} parameter settings}

\autoref{tab:03} presents the parameter settings of the adopted GPOPS-\uppercase\expandafter{\romannumeral2} toolbox.

\begin{table}[h]
    \centering
    \caption{Settings of the GPOPS-\uppercase\expandafter{\romannumeral2} toolbox }
    \begin{tabular}{lll}
        \toprule
        Stage & Parameter & Value  \\
        \midrule
        Problem 3 and 4 & setup.nlp.solver & snopt\\  
        Problem 3 and 4 & setup.derivatives.derivativelevel & second\\ 
        Problem 3 and 4 & setup.derivatives.supplier & sparseCD\\
        Problem 3 and 4 & setup.method & RPMintegration\\
        Problem 3 and 4 & setup.mesh.method & hp1\\
        Problem 3 and 4 & setup.mesh.phase.colpoints & 4*ones(1,1)\\
        Problem 3 and 4 & setup.mesh.phase.fraction  & ones(1,1)/1\\
        Problem 3 & setup.mesh.tolerance & $10^{-2}$\\
        Problem 4 & setup.mesh.tolerance & $10^{-6}$\\
        \bottomrule
    \end{tabular}
    \label{tab:03}
\end{table}

\end{appendices}

\section*{Acknowledgement}

Financial support from the National Natural Science Foundation of China (No. 72071214) and Peng Cheng Laboratory (No. PCL2024Y02) is gratefully acknowledged.

\bibliographystyle{misc/abbrvnat}
\bibliography{UAM_refS}

\begin{thebibliography}{59}
\providecommand{\natexlab}[1]{#1}
\providecommand{\url}[1]{\texttt{#1}}
\expandafter\ifx\csname urlstyle\endcsname\relax
  \providecommand{\doi}[1]{doi: #1}\else
  \providecommand{\doi}{doi: \begingroup \urlstyle{rm}\Url}\fi

\bibitem[Anderson(1999)]{anderson1999aircraft}
Anderson, J.~D., 1999.
\newblock \emph{Aircraft performance and design}, volume 1221.
\newblock McGraw-Hill New York.

\bibitem[Armijos et~al.(2024)Armijos, Li, Cassandras, Al-Nadawi, Araki, Chalaki, Moradi-Pari, Mahjoub, and Tadiparthi]{armijos2024cooperative}
Armijos, A. S.~C., Li, A., Cassandras, C.~G., Al-Nadawi, Y.~K., Araki, H., Chalaki, B., Moradi-Pari, E., Mahjoub, H.~N., and Tadiparthi, V., 2024.
\newblock Cooperative energy and time-optimal lane change maneuvers with minimal highway traffic disruption.
\newblock \emph{Automatica}, 165:\penalty0 111651.

\bibitem[Bauranov and Rakas(2021)]{bauranov2021designing}
Bauranov, A. and Rakas, J., 2021.
\newblock Designing airspace for urban air mobility: A review of concepts and approaches.
\newblock \emph{Progress in Aerospace Sciences}, 125:\penalty0 100726.

\bibitem[Borzemski et~al.(2018)Borzemski, {\'S}wi{\k{a}}tek, and Wilimowska]{borzemski2018information}
Borzemski, L., {\'S}wi{\k{a}}tek, J., and Wilimowska, Z., 2018.
\newblock \emph{Information Systems Architecture and Technology: Proceedings of 39th International Conference on Information Systems Architecture and Technology--ISAT 2018: Part I}, volume 852.
\newblock Springer.

\bibitem[Bradford(2020)]{Bradford2020UAM}
Bradford, S., 2020.
\newblock {Urban Air Mobility (UAM) concept of operations v1.0}.
\newblock Available at: \url{https://www.faa.gov/sites/faa.gov/files/Urban%20Air%20Mobility%20%28UAM%29%20Concept%20of%20Operations%202.0_0.pdf}.

\bibitem[Chen et~al.(2023)Chen, Zhou, and Chung]{chen2023integrated}
Chen, J., Zhou, Y., and Chung, E., 2023.
\newblock An integrated approach to optimal merging sequence generation and trajectory planning of connected automated vehicles for freeway on-ramp merging sections.
\newblock \emph{IEEE Transactions on Intelligent Transportation Systems}, 25\penalty0 (2):\penalty0 1897--1912.

\bibitem[Chen et~al.(2024)Chen, Evans, Brittain, and Wei]{chen2024integrated}
Chen, S., Evans, A.~D., Brittain, M., and Wei, P., 2024.
\newblock {Integrated conflict management for UAM with strategic demand capacity balancing and learning-based tactical deconfliction}.
\newblock \emph{IEEE Transactions on Intelligent Transportation Systems}, 25\penalty0 (8):\penalty0 10049--10061.

\bibitem[Cohen et~al.(2021)Cohen, Shaheen, and Farrar]{cohen2021urban}
Cohen, A.~P., Shaheen, S.~A., and Farrar, E.~M., 2021.
\newblock Urban air mobility: History, ecosystem, market potential, and challenges.
\newblock \emph{IEEE Transactions on Intelligent Transportation Systems}, 22\penalty0 (9):\penalty0 6074--6087.

\bibitem[Cummings and Mahmassani(2023)]{cummings2023measuring}
Cummings, C. and Mahmassani, H.~S., 2023.
\newblock Measuring the impact of airspace restrictions on air traffic flow using four-dimensional system fundamental diagrams for urban air mobility.
\newblock \emph{Transportation Research Record}, 2677\penalty0 (1):\penalty0 1012--1026.

\bibitem[Deniz et~al.(2024)Deniz, Wu, Shi, and Wang]{deniz2024reinforcement}
Deniz, S., Wu, Y., Shi, Y., and Wang, Z., 2024.
\newblock A reinforcement learning approach to vehicle coordination for structured advanced air mobility.
\newblock \emph{Green Energy and Intelligent Transportation}, 3\penalty0 (2):\penalty0 100157.

\bibitem[Doole et~al.(2022)Doole, Ellerbroek, and Hoekstra]{doole2022investigation}
Doole, M., Ellerbroek, J., and Hoekstra, J.~M., 2022.
\newblock Investigation of merge assist policies to improve safety of drone traffic in a constrained urban airspace.
\newblock \emph{Aerospace}, 9\penalty0 (3):\penalty0 120.

\bibitem[Duret et~al.(2020)Duret, Wang, and Ladino]{duret2020hierarchical}
Duret, A., Wang, M., and Ladino, A., 2020.
\newblock A hierarchical approach for splitting truck platoons near network discontinuities.
\newblock \emph{Transportation Research Part B: Methodological}, 132:\penalty0 285--302.

\bibitem[EASA(2024)]{EasyAccess2024}
EASA, 2024.
\newblock {Easy Access Rules for small category VCA (SC-VTOL + MOC) (Revision 0)}.
\newblock Available at: \url{https://www.easa.europa.eu/en/document-library/easy-access-rules/easy-access-rules-small-category-vca}.

\bibitem[El~Asslouj et~al.(2024)El~Asslouj, Uppaluru, Ramezani, Atkins, and Rastgoftar]{el2024fixed}
El~Asslouj, A., Uppaluru, H., Ramezani, M., Atkins, E., and Rastgoftar, H., 2024.
\newblock A fixed air corridor model for {UAS} traffic management in urban areas.
\newblock \emph{IEEE Transactions on Aerospace and Electronic Systems}, 60\penalty0 (5):\penalty0 5651--5662.

\bibitem[Fontaine(2023)]{Fontaine2023UAM}
Fontaine, P., 2023.
\newblock {Urban Air Mobility (UAM) concept of operations v2.0}.
\newblock Available at: \url {https://www.faa.gov/sites/faa.gov/files/Urban%20Air%20Mobility%20%28UAM%29%20Concept%20of%20Operations%202.0_0.pdf}.

\bibitem[Gao et~al.(2023)Gao, Ma, Li, and Shen]{gao2023hybrid}
Gao, C., Ma, J., Li, T., and Shen, Y., 2023.
\newblock Hybrid swarm intelligent algorithm for multi-{UAV} formation reconfiguration.
\newblock \emph{Complex {\&} Intelligent Systems}, 9\penalty0 (2):\penalty0 1929--1962.

\bibitem[Gao et~al.(2021)Gao, Wu, Hao, Long, Byon, and Long]{gao2021optimal}
Gao, Z., Wu, Z., Hao, W., Long, K., Byon, Y.-J., and Long, K., 2021.
\newblock Optimal trajectory planning of connected and automated vehicles at on-ramp merging area.
\newblock \emph{IEEE Transactions on Intelligent Transportation Systems}, 23\penalty0 (8):\penalty0 12675--12687.

\bibitem[Gong et~al.(2023)Gong, Huang, Jia, and Dai]{gong2023modeling}
Gong, H., Huang, B., Jia, B., and Dai, H., 2023.
\newblock Modeling power consumptions for multirotor {UAVs}.
\newblock \emph{IEEE Transactions on Aerospace and Electronic Systems}, 59\penalty0 (6):\penalty0 7409--7422.

\bibitem[He et~al.(2022)He, He, Li, Zhang, and Xiao]{he2022route}
He, X., He, F., Li, L., Zhang, L., and Xiao, G., 2022.
\newblock A route network planning method for urban air delivery.
\newblock \emph{Transportation Research Part E: Logistics and Transportation Review}, 166:\penalty0 102872.

\bibitem[Jiang et~al.(2025)Jiang, Guo, and Luo]{jiang2025bi}
Jiang, P., Guo, S., and Luo, X., 2025.
\newblock Bi-level collaborative optimization for medical consumable order splitting and reorganization considering multi-dimensional and multi-scale characteristics.
\newblock \emph{Applied Sciences}, 15\penalty0 (14):\penalty0 7627.

\bibitem[Jiang et~al.(2022)Jiang, Pan, Zhong, Chen, Li, and Wang]{jiang2022coordination}
Jiang, S., Pan, T., Zhong, R., Chen, C., Li, X.-a., and Wang, S., 2022.
\newblock Coordination of mixed platoons and eco-driving strategy for a signal-free intersection.
\newblock \emph{IEEE Transactions on Intelligent Transportation Systems}, 24\penalty0 (6):\penalty0 6597--6613.

\bibitem[Jing et~al.(2019)Jing, Hui, Zhao, Rios-Torres, and Khattak]{jing2019cooperative}
Jing, S., Hui, F., Zhao, X., Rios-Torres, J., and Khattak, A.~J., 2019.
\newblock Cooperative game approach to optimal merging sequence and on-ramp merging control of connected and automated vehicles.
\newblock \emph{IEEE Transactions on Intelligent Transportation Systems}, 20\penalty0 (11):\penalty0 4234--4244.

\bibitem[Jing et~al.(2022)Jing, Hui, Zhao, Rios-Torres, and Khattak]{jing2022integrated}
Jing, S., Hui, F., Zhao, X., Rios-Torres, J., and Khattak, A.~J., 2022.
\newblock Integrated longitudinal and lateral hierarchical control of cooperative merging of connected and automated vehicles at on-ramps.
\newblock \emph{IEEE Transactions on Intelligent Transportation Systems}, 23\penalty0 (12):\penalty0 24248--24262.

\bibitem[Kopardekar et~al.(2016)Kopardekar, Rios, Prevot, Johnson, Jung, and Robinson]{kopardekar2016unmanned}
Kopardekar, P., Rios, J., Prevot, T., Johnson, M., Jung, J., and Robinson, J.~E., 2016.
\newblock Unmanned aircraft system traffic management ({UTM}) concept of operations.
\newblock In \emph{AIAA AVIATION Forum and Exposition}, number ARC-E-DAA-TN32838.

\bibitem[Lee et~al.(2022)Lee, Moolchandani, and Arneson]{lee2022demand}
Lee, H., Moolchandani, K.~A., and Arneson, H., 2022.
\newblock Demand capacity balancing at vertiports for initial strategic conflict management of urban air mobility operations.
\newblock In \emph{2022 IEEE/AIAA 41st Digital Avionics Systems Conference (DASC)}, pages 1--10. IEEE.

\bibitem[Li et~al.(2022)Li, Fu, Zhen, Yuan, Wang, Lu, Tong, Zeng, and Schnieders]{li2022bilevel}
Li, L., Fu, X., Zhen, H.-L., Yuan, M., Wang, J., Lu, J., Tong, X., Zeng, J., and Schnieders, D., 2022.
\newblock Bilevel learning for large-scale flexible flow shop scheduling.
\newblock \emph{Computers \& Industrial Engineering}, 168:\penalty0 108140.

\bibitem[Liang et~al.(2018)Liang, Delahaye, and Marechal]{LIANG2018207}
Liang, M., Delahaye, D., and Marechal, P., 2018.
\newblock Conflict-free arrival and departure trajectory planning for parallel runway with advanced point-merge system.
\newblock \emph{Transportation Research Part C: Emerging Technologies}, 95:\penalty0 207--227.

\bibitem[Liu et~al.(2024)Liu, Su, Zhu, Guo, and You]{liu2024flight}
Liu, M., Su, Z., Zhu, J., Guo, F., and You, Y., 2024.
\newblock Flight analysis and optimization design of vectored thrust {eVTOL} based on cooperative flight/propulsion control.
\newblock \emph{Aerospace Science and Technology}, 149:\penalty0 109143.

\bibitem[Lu et~al.(2023)Lu, Hong, and Holzapfel]{lu2023multi}
Lu, Z., Hong, H., and Holzapfel, F., 2023.
\newblock Multi-phase vertical take-off and landing trajectory optimization with feasible initial guesses.
\newblock \emph{Aerospace}, 11\penalty0 (1):\penalty0 39.

\bibitem[Mayakonda et~al.(2020)Mayakonda, Justin, Anand, Weit, Wen, Zaidi, and Mavris]{mayakonda2020top}
Mayakonda, M., Justin, C.~Y., Anand, A., Weit, C.~J., Wen, J., Zaidi, T., and Mavris, D., 2020.
\newblock A top-down methodology for global urban air mobility demand estimation.
\newblock In \emph{Aiaa Aviation 2020 Forum}, page 3255.

\bibitem[Mu et~al.(2021)Mu, Du, and Zhao]{mu2021event}
Mu, C., Du, L., and Zhao, X., 2021.
\newblock Event triggered rolling horizon based systematical trajectory planning for merging platoons at mainline-ramp intersection.
\newblock \emph{Transportation Research Part C: Emerging Technologies}, 125:\penalty0 103006.

\bibitem[Ni et~al.(2022)Ni, Bi, Wu, and Ma]{ni2022energy}
Ni, W., Bi, Y., Wu, D., and Ma, X., 2022.
\newblock Energy-optimal trajectory planning for solar-powered aircraft using soft actor-critic.
\newblock \emph{Chinese Journal of Aeronautics}, 35\penalty0 (10):\penalty0 337--353.

\bibitem[Park et~al.(2023)Park, Kim, Suk, and Kim]{park2023trajectory}
Park, J., Kim, I., Suk, J., and Kim, S., 2023.
\newblock {Trajectory optimization for takeoff and landing phase of UAM considering energy and safety}.
\newblock \emph{Aerospace Science and Technology}, 140:\penalty0 108489.

\bibitem[Patterson and Rao(2014)]{patterson2014gpops}
Patterson, M.~A. and Rao, A.~V., 2014.
\newblock {GPOPS}-\uppercase\expandafter{\romannumeral2}: A {MATLAB} software for solving multiple-phase optimal control problems using hp-adaptive {G}aussian quadrature collocation methods and sparse nonlinear programming.
\newblock \emph{ACM Transactions on Mathematical Software (TOMS)}, 41\penalty0 (1):\penalty0 1--37.

\bibitem[Preis et~al.(2023)Preis, Husemann, and Shamiyeh]{preis2023time}
Preis, L., Husemann, M., and Shamiyeh, M., 2023.
\newblock Time-and energy-saving potentials of efficient urban air mobility airspace structures.
\newblock \emph{AIAA Journal}, 61\penalty0 (12):\penalty0 5571--5583.

\bibitem[Quan(2017)]{quan2017introduction}
Quan, Q., 2017.
\newblock \emph{Introduction to multicopter design and control}, volume~10.
\newblock Springer.

\bibitem[Ren et~al.(2023)Ren, Qu, Wang, Ma, and Lu]{ren2023aircraft}
Ren, J., Qu, S., Wang, L., Ma, L., and Lu, T., 2023.
\newblock Aircraft scheduling optimization model for on-ramp of corridors-in-the-sky.
\newblock \emph{Electronic Research Archive}, 31\penalty0 (6).

\bibitem[Sacharny et~al.(2022)Sacharny, Henderson, and Marston]{sacharny2022lane}
Sacharny, D., Henderson, T.~C., and Marston, V.~V., 2022.
\newblock Lane-based large-scale {UAS} traffic management.
\newblock \emph{IEEE Transactions on Intelligent Transportation Systems}, 23\penalty0 (10):\penalty0 18835--18844.

\bibitem[Saouabi et~al.(2024)Saouabi, Nouri, and Belkahla~Driss]{saouabi2024two}
Saouabi, M. D.~E., Nouri, H.~E., and Belkahla~Driss, O., 2024.
\newblock A two-level evolutionary algorithm for dynamic scheduling in flexible job shop environment.
\newblock \emph{Evolutionary Intelligence}, 17\penalty0 (5):\penalty0 4133--4153.

\bibitem[Shen et~al.(2023)Shen, Zhou, Huang, Huang, Wang, and Wang]{shen2023convex}
Shen, Z., Zhou, G., Huang, H., Huang, C., Wang, Y., and Wang, F.-Y., 2023.
\newblock Convex optimization-based trajectory planning for quadrotors landing on aerial vehicle carriers.
\newblock \emph{IEEE Transactions on Intelligent Vehicles}, 9\penalty0 (1):\penalty0 138--150.

\bibitem[Shi et~al.(2023)Shi, Li, Chen, Kong, and Luo]{shi2023cooperative}
Shi, J., Li, K., Chen, C., Kong, W., and Luo, Y., 2023.
\newblock Cooperative merging strategy in mixed traffic based on optimal final-state phase diagram with flexible highway merging points.
\newblock \emph{IEEE Transactions on Intelligent Transportation Systems}, 24\penalty0 (10):\penalty0 11185--11197.

\bibitem[Sl{\'a}ma et~al.(2022)Sl{\'a}ma, V{\'a}{\v{n}}a, and Faigl]{slama2022generating}
Sl{\'a}ma, J., V{\'a}{\v{n}}a, P., and Faigl, J., 2022.
\newblock Generating safe corridors roadmap for urban air mobility.
\newblock In \emph{2022 IEEE/RSJ International Conference on Intelligent Robots and Systems (IROS)}, pages 11866--11871. IEEE.

\bibitem[Song(2022)]{song2022optimal}
Song, K., 2022.
\newblock Optimal vertiport airspace and approach control strategy for urban air mobility ({UAM}).
\newblock \emph{Sustainability}, 15\penalty0 (1):\penalty0 437.

\bibitem[Sunil et~al.(2015)Sunil, Hoekstra, Ellerbroek, Bussink, Nieuwenhuisen, Vidosavljevic, and Kern]{sunil2015metropolis}
Sunil, E., Hoekstra, J., Ellerbroek, J., Bussink, F., Nieuwenhuisen, D., Vidosavljevic, A., and Kern, S., 2015.
\newblock Metropolis: Relating airspace structure and capacity for extreme traffic densities.
\newblock In \emph{ATM seminar 2015, 11th USA/EUROPE Air Traffic Management R\&D Seminar}.

\bibitem[Tang et~al.(2022)Tang, Zhu, Zhang, Iryo-Asano, and Nakamura]{tang2022novel}
Tang, Z., Zhu, H., Zhang, X., Iryo-Asano, M., and Nakamura, H., 2022.
\newblock A novel hierarchical cooperative merging control model of connected and automated vehicles featuring flexible merging positions in system optimization.
\newblock \emph{Transportation Research Part C: Emerging Technologies}, 138:\penalty0 103650.

\bibitem[Wang et~al.(2021{\natexlab{a}})Wang, Yue, Shan, He, and Wang]{wang2021formation}
Wang, Y., Yue, Y., Shan, M., He, L., and Wang, D., 2021{\natexlab{a}}.
\newblock Formation reconstruction and trajectory replanning for multi-{UAV} patrol.
\newblock \emph{IEEE/ASME Transactions on Mechatronics}, 26\penalty0 (2):\penalty0 719--729.

\bibitem[Wang et~al.(2021{\natexlab{b}})Wang, Wei, and Sun]{wang2021optimal}
Wang, Z., Wei, P., and Sun, L., 2021{\natexlab{b}}.
\newblock Optimal cruise, descent, and landing of evtol vehicles for urban air mobility using convex optimization.
\newblock In \emph{AIAA Scitech 2021 Forum}, page 0577.

\bibitem[Weng et~al.(2025)Weng, Pan, Chen, and Zhong]{weng2025urban}
Weng, C., Pan, T., Chen, C., and Zhong, R., 2025.
\newblock Urban low-altitude air transport management: Bridging dynamic traffic control and static network equilibrium.
\newblock \emph{Transportation Research Part C: Emerging Technologies}, 178:\penalty0 105237.

\bibitem[Wu et~al.(2021)Wu, Low, and Hu]{wu2021trajectory}
Wu, Y., Low, K.~H., and Hu, X., 2021.
\newblock Trajectory-based flight scheduling for airmetro in urban environments by conflict resolution.
\newblock \emph{Transportation Research Part C: Emerging Technologies}, 131:\penalty0 103355.

\bibitem[Wu et~al.(2022)Wu, Deniz, Shi, and Wang]{wu2022convex}
Wu, Y., Deniz, S., Shi, Y., and Wang, Z., 2022.
\newblock A convex optimization approach to real-time merging control of evtol vehicles for future urban air mobility.
\newblock In \emph{AIAA AVIATIoN 2022 forum}, page 3319.

\bibitem[Wu et~al.(2025)Wu, Deniz, Shi, and Wang]{wu2024convex}
Wu, Y., Deniz, S., Shi, Y., and Wang, Z., 2025.
\newblock Convex approach to real-time multiphase trajectory optimization for urban air mobility.
\newblock \emph{Journal of Air Transportation}, 33\penalty0 (1):\penalty0 69--85.

\bibitem[{Xinhua Daily Telegraph}(2024)]{xinhuabao2024}
{Xinhua Daily Telegraph}, Sept. 2024.
\newblock How can `low altitude' be `economic'?
\newblock [Online]. Available at: \url{http://finance.people.com.cn/n1/2024/0913/c1004-40319544.html}.

\bibitem[Xue et~al.(2022)Xue, Ding, Yu, and Wang]{xue2022platoon}
Xue, Y., Ding, C., Yu, B., and Wang, W., 2022.
\newblock A platoon-based hierarchical merging control for on-ramp vehicles under connected environment.
\newblock \emph{IEEE Transactions on Intelligent Transportation Systems}, 23\penalty0 (11):\penalty0 21821--21832.

\bibitem[Yang et~al.(2018)Yang, Zhong, Dessouky, and Postolache]{yang2018integrated}
Yang, Y., Zhong, M., Dessouky, Y., and Postolache, O., 2018.
\newblock An integrated scheduling method for {AGV} routing in automated container terminals.
\newblock \emph{Computers \& Industrial Engineering}, 126:\penalty0 482--493.

\bibitem[Yue and Fan(2022)]{yue2022dynamic}
Yue, L. and Fan, H., 2022.
\newblock Dynamic scheduling and path planning of automated guided vehicles in automatic container terminal.
\newblock \emph{IEEE/CAA Journal of Automatica Sinica}, 9\penalty0 (11):\penalty0 2005--2019.

\bibitem[Zhang et~al.(2024)Zhang, Mei, Wang, Wang, and Liu]{zhang2024collision}
Zhang, P., Mei, Y., Wang, H., Wang, W., and Liu, J., 2024.
\newblock Collision-free trajectory planning for {UAVs} based on sequential convex programming.
\newblock \emph{Aerospace Science and Technology}, 152:\penalty0 109404.

\bibitem[Zhang et~al.(2025)Zhang, Yu, Cheong, Liu, and Wang]{zhang2025hierarchical}
Zhang, T., Yu, D., Cheong, K.~H., Liu, Y., and Wang, Z., 2025.
\newblock Hierarchical distributed strategy for autonomous {UAV} swarm formation aggregation.
\newblock \emph{IEEE Transactions on Vehicular Technology}.

\bibitem[Zhou et~al.(2024)Zhou, Yan, and Yang]{zhou2024enhancing}
Zhou, J., Yan, L., and Yang, K., 2024.
\newblock Enhancing system-level safety in mixed-autonomy platoon via safe reinforcement learning.
\newblock \emph{IEEE Transactions on Intelligent Vehicles}.

\bibitem[Zhou et~al.(2018)Zhou, Cholette, Bhaskar, and Chung]{zhou2018optimal}
Zhou, Y., Cholette, M.~E., Bhaskar, A., and Chung, E., 2018.
\newblock Optimal vehicle trajectory planning with control constraints and recursive implementation for automated on-ramp merging.
\newblock \emph{IEEE Transactions on Intelligent Transportation Systems}, 20\penalty0 (9):\penalty0 3409--3420.

\end{thebibliography}

\end{document}